\documentclass[12pt,english]{article}
\usepackage{amssymb,amsthm}
\setlength{\textwidth}{6.3in}
\setlength{\textheight}{8.7in}
\setlength{\topmargin}{0pt}
\setlength{\headsep}{0pt}
\setlength{\headheight}{0pt}
\setlength{\oddsidemargin}{0pt}
\setlength{\evensidemargin}{0pt}

\usepackage{amsmath,amssymb}
\usepackage{graphicx}
\usepackage[colorlinks=true,citecolor=black,linkcolor=black,urlcolor=blue]{hyperref}

\usepackage{mathtools}
\usepackage{amsrefs}
\usepackage[normalem]{ulem}
\usepackage{comment}
\usepackage{bm}
\usepackage{tikz}
\usetikzlibrary{arrows,shapes}
\usepackage{colortbl}
\usepackage{xparse}
\usepackage{caption}
\usepackage{microtype}

\definecolor{mhcblue}{HTML}{0077CC} 
\definecolor{davidsonred}{HTML}{AC1A2F} 

\theoremstyle{plain}
\newtheorem{thm}{\protect\theoremname}
\theoremstyle{plain}
\newtheorem{lem}[thm]{\protect\lemmaname}
\theoremstyle{plain}
\newtheorem{cor}[thm]{\protect\corollaryname}
\theoremstyle{plain}
\newtheorem{prop}[thm]{\protect\propositionname}

\makeatletter

\providecommand{\tabularnewline}{\\}

\DeclareMathOperator{\pk}{\mathsf{pk}}
\DeclareMathOperator{\exc}{\mathsf{exc}}
\DeclareMathOperator{\des}{\mathsf{des}}
\DeclareMathOperator{\Dasc}{\mathsf{Dasc}}
\DeclareMathOperator{\Ddes}{\mathsf{Ddes}}
\DeclareMathOperator{\st}{\mathsf{st}}
\DeclareMathOperator{\val}{\mathsf{val}}
\DeclareMathOperator{\dasc}{\mathsf{dasc}}
\DeclareMathOperator{\ddes}{\mathsf{ddes}}
\DeclareMathOperator{\dbl}{\mathsf{dbl}}
\DeclareMathOperator{\cdbl}{\mathsf{cdbl}}
\DeclareMathOperator{\cpk}{\mathsf{cpk}}
\DeclareMathOperator{\cval}{\mathsf{cval}}
\DeclareMathOperator{\cddes}{\mathsf{cddes}}
\DeclareMathOperator{\cdasc}{\mathsf{cdasc}}
\DeclareMathOperator{\fix}{\mathsf{fix}}
\DeclareMathOperator{\cyc}{\mathsf{cyc}}

\usepackage{titlesec}
\titleformat{\section}
  {\normalfont\large\bfseries}{\thesection.}{.8ex}{} 
\titleformat{\subsection}
  {\normalfont\bfseries}{\thesubsection.}{.8ex}{} 
\titleformat{\subsubsection}
  {\normalfont\bfseries}{\thesubsubsection.}{.8ex}{}

\let\originalleft\left
\let\originalright\right
\renewcommand{\left}{\mathopen{}\mathclose\bgroup\originalleft}
\renewcommand{\right}{\aftergroup\egroup\originalright}

\usepackage{babel}
\providecommand{\corollaryname}{Corollary}
\providecommand{\lemmaname}{Lemma}
\providecommand{\propositionname}{Proposition}
\providecommand{\theoremname}{Theorem}




%
%


\title{Hopping from Chebyshev polynomials\\ to permutation statistics}


\author{Jordan O.\ Tirrell\\
	\small Department of Mathematics and Statistics\\[-0.8ex]
	\small Mount Holyoke College\\[-0.8ex] 
	\small South Hadley, MA, U.S.A.\\
	\small\tt jtirrell@mtholyoke.edu\\
	\and
	Yan Zhuang\\
	\small Department of Mathematics and Computer Science\\[-0.8ex]
	\small Davidson College\\[-0.8ex]
	\small Davidson, NC, U.S.A.\\
	\small\tt yazhuang@davidson.edu}

\begin{document}

\maketitle

\begin{abstract}
We prove various formulas which express exponential generating functions
counting permutations by the peak number, valley number, double ascent
number, and double descent number statistics in terms of the exponential
generating function for Chebyshev polynomials, as well as cyclic analogues
of these formulas for derangements. We give several applications of
these results, including formulas for the $(-1)$-evaluation of some
of these distributions. Our proofs are combinatorial and 
involve the use of monomino-domino tilings, the modified
Foata\textendash Strehl action (a.k.a.\ valley-hopping), and a cyclic analogue of this action due to Sun and Wang.
\end{abstract}

\pagebreak[2]

\section{Introduction}

Let $\pi=\pi_{1}\pi_{2}\cdots\pi_{n}$ be a permutation (written in
one-line notation) in $\mathfrak{S}_{n}$, the set of permutations
of $[n]=\{1,2,\dots,n\}$. We say that $\pi_{i}$ (where $i\in[n-1]$)
is a \textit{descent} if $\pi_{i}>\pi_{i+1}$, and that $\pi_{i}$
(where $2\leq i\leq n-1$) is a \textit{peak} of $\pi$ if $\pi_{i-1}<\pi_{i}>\pi_{i+1}$.
Define $\des(\pi)$ to be the number of descents of $\pi$ and $\pk(\pi)$
to be the number of peaks of $\pi$. The descent number $\des$ and
peak number $\pk$ are classical permutation statistics whose study
dates back to MacMahon \cite{macmahon} and to David and Barton \cite{David1962},
respectively.

The $n$th \textit{Eulerian polynomial}\footnote{We note that many works instead define the $n$th Eulerian polynomial to be
	$\sum_{\pi\in\mathfrak{S}_{n}}t^{\des(\pi)+1}$.}
\[
A_{n}(t)\coloneqq\sum_{\pi\in\mathfrak{S}_{n}}t^{\des(\pi)}
\]
encodes the distribution of the descent number $\des$ over $\mathfrak{S}_{n}$, and the $n$th \textit{peak polynomial } 
\[
P_{n}^{\pk}(t)\coloneqq\sum_{\pi\in\mathfrak{S}_{n}}t^{\pk(\pi)}
\]
is the analogous polynomial for the peak number $\pk$. 

It is well-known \cite[Th\'{e}or\`{e}me 5.6]{Foata1970} that the
$(-1)$-evaluation of the Eulerian distribution is given by the formula
\begin{equation}
A_{n}(-1)=\begin{cases}
(-1)^{(n-1)/2}E_{n}, & \mbox{if }n\text{ is odd,}\\
0, & \mbox{if }n\text{ is even,}
\end{cases}\label{e-A-1}
\end{equation}
where $E_{n}$ is the $n$th \textit{Euler number} defined by 
\[
\sum_{n=0}^{\infty}E_{n}\frac{x^{n}}{n!}=\sec(x)+\tan(x).
\]
(The Euler numbers $E_{n}$ for odd $n$ are called \textit{tangent
numbers}, and those for even $n$ are called \textit{secant numbers}.)
No combinatorial formula for $P_{n}^{\pk}(-1)$ is known, although
this sequence does appear on the OEIS \cite[A006673]{oeis}. The first
several terms of this sequence are given in the following table:
\noindent \begin{center}
\begin{tabular}{c|cccccccccc}
$n$ & 1 & 2 & 3 & 4 & 5 & 6 & 7 & 8 & 9 & 10\tabularnewline
\hline 
$P_{n}^{\pk}(-1)$ & $1$ & $2$ & $2$ & $-8$ & $-56$ & $-112$ & $848$ & $9088$ & $25216$ & $-310528$\tabularnewline
\end{tabular}
\end{center}
We note that the apparent 6-periodicity of the sequence of signs breaks at $n=42$. Very recently, Troyka \cite{Troyka2019} argued that there is no $k$ for which the sequence of signs of the $P_{n}^{\pk}(-1)$ is $k$-periodic, which suggests that there is unlikely to be a nice combinatorial interpretation for the $P_{n}^{\pk}(-1)$.

The exponential generating functions for $A_{n}(t)$ and $P_{n}^{\pk}(t)$
have the following well-known expressions:\footnote{These exponential generating functions are usually given a constant
term of 1 in the literature, but it is more convenient to define these
without the constant term in this work.}
\[
A(t;x)\coloneqq\sum_{n=1}^{\infty}A_{n}(t)\frac{x^{n}}{n!}=\frac{e^{(1-t)x}-1}{1-te^{(1-t)x}};
\]
\[
P^{\pk}(t;x)\coloneqq\sum_{n=1}^{\infty}P_{n}^{\pk}(t)\frac{x^{n}}{n!}=\frac{1}{\sqrt{1-t}\coth(x\sqrt{1-t})-1}.
\]
The work in this paper was originally inspired by the curious observation
that $A(-1;x)$ and $P^{\pk}(-1;x)$ can be expressed as the logarithmic
derivative of the exponential generating function of some non-negative
integer sequence. For the Eulerian polynomials, this sequence $\{f_{n}\}_{n\geq0}$
is simply $f_{n}\coloneqq(n+1)\negthickspace\mod2$, i.e., the sequence
$1,0,1,0,\dots$, whose exponential generating function is given by 
\[
F(x)\coloneqq1+\frac{x^{2}}{2!}+\frac{x^{4}}{4!}+\cdots=\cosh(x).
\]
For the peak polynomials, this sequence is the sequence of Pell numbers,
which has been widely studied in combinatorics and number theory.
The Pell numbers $\{g_{n}\}_{n\geq0}$ are defined by the recursive
formula $g_{n}\coloneqq 2g_{n-1}+g_{n-2}$ for $n\geq2$ with initial values
$g_{0}=1$ and $g_{1}=0$. The first several terms of
this sequence are below:\renewcommand{\arraystretch}{1.2}
\begin{center}
\begin{tabular}{c|ccccccccccccc}
$n$ & 0 & 1 & 2 & 3 & 4 & 5 & 6 & 7 & 8 & 9 & 10 & 11 & 12\tabularnewline
\hline 
$g_{n}$ & 1 & 0 & 1 & 2 & 5 & 12 & 29 & 70 & 169 & 408 & 985 & 2378 & 5741\tabularnewline
\end{tabular}
\end{center}

\noindent Note that the indexing here is slightly different from the
usual indexing of the Pell numbers (see OEIS \cite[A000129]{oeis}).
The exponential generating function of $\{g_{n}\}_{n\geq0}$ is given
by 

\[
G(x)\coloneqq\sum_{n=0}^{\infty}g_{n}\frac{x^{n}}{n!}=\frac{1}{2}e^{x}(2\cosh(x\sqrt{2})-\sqrt{2}\sinh(x\sqrt{2})).
\]
\begin{thm}
\label{t-main} The exponential generating functions for the Eulerian
and peak polynomials evaluated at $t=-1$ can be expressed as the
logarithmic derivative of $F(x)$ and $G(x)$, respectively. That
is:
\begin{enumerate}
\item [\normalfont{(a)}] ${\displaystyle {\displaystyle A(-1;x)=\frac{\frac{d}{dx}F(x)}{F(x)}}}$
\item [\normalfont{(b)}] ${\displaystyle P^{\pk}(-1;x)=\frac{\frac{d}{dx}G(x)}{G(x)}}$
\end{enumerate}
\end{thm}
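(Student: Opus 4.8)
The plan is to prove both identities by direct computation, substituting $t=-1$ into the closed forms for $A(t;x)$ and $P^{\pk}(t;x)$ quoted above and matching the result against the logarithmic derivatives of $F$ and $G$. Since those two generating functions are already given explicitly, no combinatorial input is needed at this stage; the content is entirely hyperbolic-function algebra.

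For part (a), I would set $t=-1$ in $A(t;x)=\dfrac{e^{(1-t)x}-1}{1-te^{(1-t)x}}$, obtaining $A(-1;x)=\dfrac{e^{2x}-1}{1+e^{2x}}$. Multiplying numerator and denominator by $e^{-x}$ turns this into $\dfrac{2\sinh x}{2\cosh x}=\tanh x$. On the other side, since $F(x)=\cosh x$ we have $\dfrac{F'(x)}{F(x)}=\dfrac{\sinh x}{\cosh x}=\tanh x$, and the two agree. This part is immediate.

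For part (b), I would substitute $t=-1$ into $P^{\pk}(t;x)=\dfrac{1}{\sqrt{1-t}\coth(x\sqrt{1-t})-1}$, giving $P^{\pk}(-1;x)=\dfrac{1}{\sqrt 2\coth(\sqrt 2\,x)-1}$; clearing the $\coth$ rewrites this as $\dfrac{\sinh(\sqrt 2\,x)}{\sqrt 2\cosh(\sqrt 2\,x)-\sinh(\sqrt 2\,x)}$. For the right-hand side I would write $G(x)=\tfrac12 e^x H(x)$ with $H(x)=2\cosh(\sqrt 2\,x)-\sqrt 2\sinh(\sqrt 2\,x)$, so that the product rule gives $G'(x)=\tfrac12 e^x\bigl(H(x)+H'(x)\bigr)$. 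The crux is the cancellation in $H(x)+H'(x)$: since $H'(x)=2\sqrt 2\sinh(\sqrt 2\,x)-2\cosh(\sqrt 2\,x)$, the $\cosh$ terms cancel and the $\sinh$ coefficients combine to leave $H(x)+H'(x)=\sqrt 2\sinh(\sqrt 2\,x)$. Hence $\dfrac{G'(x)}{G(x)}=\dfrac{\sqrt 2\sinh(\sqrt 2\,x)}{2\cosh(\sqrt 2\,x)-\sqrt 2\sinh(\sqrt 2\,x)}$, and dividing top and bottom by $\sqrt 2$ reproduces exactly the expression for $P^{\pk}(-1;x)$ above.

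The only step that is not completely mechanical is recognizing that $G$ has been engineered so that the $\cosh$ contributions to $H+H'$ cancel; once that cancellation is spotted the match is forced. I would expect no genuine obstacle beyond bookkeeping, so the main care is simply in tracking the factors of $\sqrt 2$ correctly.
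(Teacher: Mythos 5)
Your computations are correct; I checked both parts. Setting $t=-1$ gives $A(-1;x)=\frac{e^{2x}-1}{1+e^{2x}}=\tanh x=F'(x)/F(x)$, and for (b) the cancellation $H(x)+H'(x)=\sqrt{2}\sinh(\sqrt{2}\,x)$ with $H(x)=2\cosh(\sqrt{2}\,x)-\sqrt{2}\sinh(\sqrt{2}\,x)$ does hold, so $G'(x)/G(x)$ matches $\bigl(\sqrt{2}\coth(\sqrt{2}\,x)-1\bigr)^{-1}$ exactly as you claim. However, your route is genuinely different from the paper's. The paper explicitly acknowledges that Theorem \ref{t-main} ``can be proven directly by algebraically manipulating the generating function formulas'' --- which is precisely what you do --- but it deliberately avoids this in favor of a combinatorially-flavored proof: it first establishes the bivariate Chebyshev formula $P^{(\pk,\dbl)}(s,t;x)=\frac{\partial_x V(s,t;x)}{1-sV(s,t;x)}$ (Theorem \ref{t-pkdblV}) by interpreting $U_n(s,t)$ as weighted monomino-domino tilings, applying a sign-reversing involution that trades dominoes for block divisions, and finishing with the valley-hopping bijection $\Phi$; part (b) is then the specialization $s=-1$, $t=1$ (since $G(x)=1+V(-1,1;x)$), and part (a) follows from Corollary \ref{c-otherstats}(a) at $t=-1$ (since $F(x)=1+V(-1,0;x)$). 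Your proof is shorter and entirely mechanical, but it takes the closed forms of $A(t;x)$ and $P^{\pk}(t;x)$ as given inputs (the paper quotes them as well-known, so this is legitimate); the paper's approach costs more machinery but buys a far stronger statement --- the full joint $(\pk,\dbl)$ distribution, from which the rest of Sections 3 and 4 flow --- and a structural explanation for why $\cosh$ and the Pell numbers appear, which is the stated point of the paper.
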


While Theorem \ref{t-main} can be proven directly by algebraically
manipulating the generating function formulas for $A(t;x)$, $P^{\pk}(t;x)$,
$F(x)$, and $G(x)$, one of our goals in this paper is to present
a combinatorially-flavored proof. In Section 2, we define several
other relevant permutation statistics and introduce a key ingredient
of our proof: the modified Foata\textendash Strehl group action (a.k.a.\ valley-hopping).
In Section 3, we define a two-parameter variant of the Chebyshev polynomials
of the second kind which specialize to both the numbers $f_{n}$ and
the Pell numbers $g_{n}$.
Like the ordinary Chebyshev polynomials of the second kind, our bivariate Chebyshev polynomials have as a combinatorial model monomino-domino tilings of a rectangle, but with slightly different weights. We present a formula (Theorem \ref{t-pkdblV})
involving these Chebyshev polynomials for the joint distribution of two statistics: the peak number, and the total number of double
ascents and double descents. We give a combinatorial proof of Theorem
\ref{t-pkdblV} which involves tilings and valley-hopping, and a special case of this result
implies Theorem \ref{t-main} (b). We transform Theorem 3 into similar
results for other permutation statistics, which we then use to prove
Theorem \ref{t-main} (a) and to prove that the $(-1)$-evaluation
of double descent distributions yields the tangent numbers. 

In Section 4, we turn our attention to counting derangements by cyclic
analogues of the permutation statistics studied in Sections 2\textendash 3.
Using a variant of valley-hopping due to Sun and Wang \cite{Sun2014}
for derangements, we prove a cyclic analogue of Theorem \ref{t-pkdblV}
and use it to derive formulas relating exponential generating
functions counting derangements by cyclic statistics with the exponential generating function for our Chebyshev polynomials. We use this to prove a result
similar to Theorem \ref{t-main} for the excedance and cyclic peak
distributions over derangements, and to prove that the $(-1)$-evaluation
of cyclic double descent distributions over derangements yields
the secant numbers.

\section{Permutation statistics and valley-hopping}

Given a permutation $\pi=\pi_{1}\pi_{2}\cdots\pi_{n}$ in $\mathfrak{S}_{n}$,
we say that $\pi_{i}$ (where $i\in[n]$) is:
\begin{itemize}
\item a \textit{valley} if $\pi_{i-1}>\pi_{i}<\pi_{i+1}$;
\item a \textit{double ascent} if $\pi_{i-1}<\pi_{i}<\pi_{i+1}$;
\item a \textit{double descent} if $\pi_{i-1}>\pi_{i}>\pi_{i+1}$;
\end{itemize}
where we are using the convention $\pi_{0}=\pi_{n+1}=\infty$.\footnote{We note that many works on permutation enumeration do not use these conventions, and simply restrict the possible positions of valleys, double ascents, and double descents to the interval from $2$ to $n-1$. What we call ``valleys'' are sometimes called ``left-right valleys'' or ``exterior valleys'', what we call ``double ascents'' are sometimes called ``right double ascents'', and what we call ``double descents'' are sometimes called ``left double descents''. (See, e.g., \cite{Zhuang2016}.)} Thus,
every letter of a permutation is either a peak, valley, double ascent,
or double descent. Define $\val(\pi)$, $\dasc(\pi)$, and $\ddes(\pi)$
to be the number of valleys, double ascents, and double descents of
$\pi$, respectively. We also define $\dbl(\pi)\coloneqq\dasc(\pi)+\ddes(\pi)$
to be the total number of double ascents and double descents of $\pi$.

For a list of statistics $\st_{1},\st_{2},\dots,\st_{m}$
and corresponding variables $t_{1},t_{2},\dots,t_{m}$, we define
the polynomials $\{P_{n}^{(\st_{1},\st_{2},\dots\st_{m})}(t_{1},t_{2},\dots,t_{m})\}_{n\geq0}$
by 
\[
P_{n}^{(\st_{1},\st_{2},\dots\st_{m})}(t_{1},t_{2},\dots,t_{m})\coloneqq\sum_{\pi\in\mathfrak{S}_{n}}t_{1}^{\st_{1}(\pi)}t_{2}^{\st_{2}(\pi)}\cdots t_{m}^{\st_{m}(\pi)}.
\]
and we let 
\[
P^{(\st_{1},\st_{2},\dots\st_{m})}(t_{1},t_{2},\dots,t_{m};x)\coloneqq\sum_{n=1}^{\infty}P_{n}^{(\st_{1},\st_{2},\dots\st_{m})}(t_{1},t_{2},\dots,t_{m})\frac{x^{n}}{n!}
\]
be their exponential generating function.\footnote{In the case where we have a single statistic $\st$, we write these
simply as $P_{n}^{\st}(t)$ and $P^{\st}(t;x)$.} For example, we have 
\[
P_{n}^{(\pk,\dbl)}(s,t)=\sum_{\pi\in\mathfrak{S}_{n}}s^{\pk(\pi)}t^{\dbl(\pi)}
\]
and 
\[
P^{(\pk,\dbl)}(s,t;x)=\sum_{n=1}^{\infty}P_{n}^{(\pk,\dbl)}(s,t)\frac{x^{n}}{n!};
\]
we will consider these on the way to proving Theorem \ref{t-main}.
Our proof will make use of a bijection based on a group action on
$\mathfrak{S}_{n}$ induced by involutions which toggle between double
ascents and double descents; we will spend the remainder of this section
defining this action and the associated bijection.

For $\pi\in\mathfrak{S}_{n}$, fix $k\in[n]$. We may write $\pi=w_{1}w_{2}kw_{4}w_{5}$
where $w_{2}$ is the maximal consecutive subword immediately to the
left of $k$ whose letters are all smaller than $k$, and $w_{4}$
is the maximal consecutive subword immediately to the right of $k$
whose letters are all smaller than $k$. For example, if $\pi=467125839$
and $k=5$, then $\pi$ is the concatenation of $w_{1}=467$, $w_{2}=12$,
$k=5$, the empty word $w_{4}$, and $w_{5}=839$.

Define $\varphi_{k}\colon\mathfrak{S}_{n}\rightarrow\mathfrak{S}_{n}$
by 
\[
\varphi_{k}(\pi)=\begin{cases}
w_{1}w_{4}kw_{2}w_{5}, & \mbox{if }k\mbox{ is a double ascent or double descent of \ensuremath{\pi},}\\
\pi, & \mbox{if }k\mbox{ is a peak or valley of \ensuremath{\pi}.}
\end{cases}
\]
Equivalently, $\varphi_{k}(\pi)=w_{1}w_{4}kw_{2}w_{5}$ if exactly
one of $w_{2}$ and $w_{4}$ is nonempty, and $\varphi_{k}(\pi)=\pi$
otherwise. For any subset $S\subseteq[n]$, we define $\varphi_{S}\colon\mathfrak{S}_{n}\rightarrow\mathfrak{S}_{n}$
by $\varphi_{S}=\prod_{k\in S}\varphi_{k}$. It is easy to see that
$\varphi_{S}$ is an involution, and that for all $S,T\subseteq[n]$,
the involutions $\varphi_{S}$ and $\varphi_{T}$ commute with each
other. Hence the involutions $\{\varphi_{S}\}_{S\subseteq[n]}$ define
a $\mathbb{Z}_{2}^{n}$-action on $\mathfrak{S}_{n}$ which is often
called the \textit{modified Foata\textendash Strehl action} or \textit{valley-hopping}.
This action is based on a classical group action of Foata and Strehl
\cite{Foata1974}, was introduced by Shapiro, Woan, and Getu \cite{Shapiro1983}, and was later rediscovered by Br\"and\'en \cite{Braenden2008}.

Let $\widetilde{\mathfrak{S}}_{n}$ denote the set of permutations of
$[n]$ with no double ascents and where each double descent is assigned
one of two colors: red or blue.\footnote{To be more precise, \textcolor{davidsonred}{Davidson College red} or \textcolor{mhcblue}{Mount Holyoke College blue}.} Then valley-hopping induces a map
$\Phi$ from $\widetilde{\mathfrak{S}}_{n}$ to $\mathfrak{S}_{n}$ defined
in the following way. Given a permutation $\pi$ in $\widetilde{\mathfrak{S}}_{n}$,
let $R(\pi)$ be the set of red double descents in $\pi$ and let
$\bar{\pi}$ be the corresponding permutation of $\pi$ in $\mathfrak{S}_{n}$,
that is, the permutation obtained by forgetting the colors on the
double descents. Then let $\Phi(\pi)=\varphi_{R(\pi)}(\bar{\pi})$.
For example, if $\pi = \textcolor{davidsonred}{7}26\textcolor{mhcblue}{5}39\textcolor{davidsonred}{8}\textcolor{mhcblue}{4}1$,
then $\Phi(\pi)=265379418$. (See Figure 1.)
\noindent \begin{center}
\begin{figure}
\begin{center}
\begin{tikzpicture}[scale=0.5] 	
\draw[step=1,lightgray,thin] (0,1) grid (10,10); 
	\tikzstyle{ridge}=[draw, line width=1, dotted, color=black] 
	\path[ridge] (0,10)--(1,7)--(2,2)--(3,6)--(4,5)--(5,3)--(6,9)--(7,8)--(8,4)--(9,1)--(10,10); 
	\tikzstyle{node0}=[circle, inner sep=2, fill=black] 
	\tikzstyle{node1}=[rectangle, inner sep=3, fill=mhcblue] 
	\tikzstyle{node2}=[diamond, inner sep=2, fill=davidsonred] 
	\node[node0] at (0,10) {}; 
	\node[node2] at (1,7) {}; 
	\node[node0] at (2,2) {}; 
	\node[node0] at (3,6) {}; 
	\node[node1] at (4,5) {}; 
	\node[node0] at (5,3) {}; 
	\node[node0] at (6,9) {}; 
	\node[node2] at (7,8) {}; 
	\node[node1] at (8,4) {}; 
	\node[node0] at (9,1) {}; 
	\node[node0] at (10,10) {}; 
	\tikzstyle{hop}=[draw, line width = 1.5, color=davidsonred,->] 
	\path[hop] (1.5,7)--(5.3,7); 
	\path[hop] (7.5,8)--(9.5,8); 
	\tikzstyle{pi}=[above=-1] 
	\node[pi] at (0,0) {$\infty$}; 
	\node[pi, color=davidsonred] at (1,0) {7}; 
	\node[pi] at (2,0) {2}; 
	\node[pi] at (3,0) {6}; 
	\node[pi, color=mhcblue] at (4,0) {5}; 
	\node[pi] at (5,0) {3}; 
	\node[pi] at (6,0) {9}; 
	\node[pi,color=davidsonred] at (7,0) {8}; 
	\node[pi,color=mhcblue] at (8,0) {4}; 
	\node[pi] at (9,0) {1}; 
	\node[pi] at (10,0) {$\infty$}; 
	\path[draw,line width=1,->] (11,5)--(15,5); 
	\begin{scope}[shift={(16,0)}] 
	\draw[step=1,lightgray,thin] (0,1) grid (10,10); 
	\path[ridge] (0,10)--(1,2)--(2,6)--(3,5)--(4,3)--(5,7)--(6,9)--(7,4)--(8,1)--(9,8)--(10,10); 
	\node[node0] at (0,10) {}; 
	\node[node0] at (1,2) {}; 
	\node[node0] at (2,6) {}; 
	\node[node0] at (3,5) {}; 
	\node[node0] at (4,3) {}; 
	\node[node0] at (5,7) {}; 
	\node[node0] at (6,9) {}; 
	\node[node0] at (7,4) {}; 
	\node[node0] at (8,1) {}; 
	\node[node0] at (9,8) {}; 
	\node[node0] at (10,10) {}; 
	\node[pi] at (0,0) {$\infty$}; 
	\node[pi] at (1,0) {2}; 
	\node[pi] at (2,0) {6}; 
	\node[pi] at (3,0) {5}; 
	\node[pi] at (4,0) {3}; 
	\node[pi] at (5,0) {7}; 
	\node[pi] at (6,0) {9};
	\node[pi] at (7,0) {4}; 
	\node[pi] at (8,0) {1}; 
	\node[pi] at (9,0) {8}; 
	\node[pi] at (10,0) {$\infty$}; 
	\end{scope}
\end{tikzpicture}
\end{center}

\caption{Valley-hopping induces a bijection between $\widetilde{\mathfrak{S}}_{n}$
and $\mathfrak{S}_{n}$.}
\end{figure}
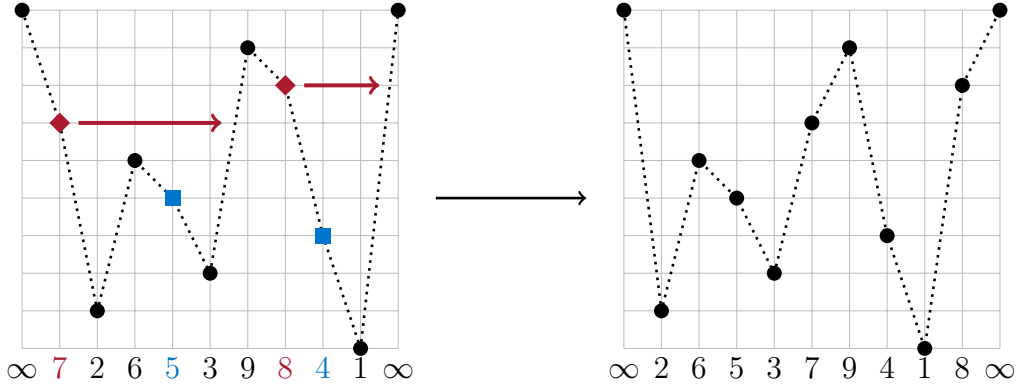
\end{center}
\begin{lem}
\label{l-pkdblbij} The map $\Phi\colon\widetilde{\mathfrak{S}}_{n}\rightarrow\mathfrak{S}_{n}$
is a $(\pk,\dbl)$-preserving bijection.
\end{lem}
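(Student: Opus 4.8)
The plan is to exploit the orbit structure of the modified Foata--Strehl action. The crucial fact I will establish first is a \emph{locality} property of the generators $\varphi_{k}$: if $k$ is a double ascent or double descent of $\pi$, then $\varphi_{k}$ flips the type of the single value $k$ (double ascent $\leftrightarrow$ double descent) but leaves the type --- peak, valley, double ascent, or double descent --- of every \emph{other} value of $\pi$ unchanged. This is where I expect essentially all of the work to lie. To check it, I would write $\pi = w_{1}w_{2}kw_{4}w_{5}$ as in the definition, recall that exactly one of $w_{2},w_{4}$ is nonempty with all letters smaller than $k$, and examine the (at most four) boundary values flanking the relocated block together with $k$ itself. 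In each case the relevant neighbor of a boundary value changes from one letter on a given side of it to another letter on the same side, so the boundary value's type is unaffected, while $k$ itself moves between a larger left neighbor and a smaller right neighbor (or vice versa) and so switches between the two ``double'' types. The commutativity and involutivity of the $\varphi_{k}$ are already granted in the text.

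Granting locality, I would record its consequences for orbits. Peaks and valleys are fixed throughout an orbit, so $\pk$ is constant on orbits; likewise the set $D$ of values that are double ascents or double descents is constant, so $\dbl = |D|$ is constant on orbits. Moreover, since the $\varphi_{k}$ for $k \in D$ independently toggle their value between the two double types, the orbit of any $\pi$ consists of exactly $2^{|D|}$ permutations, one for each assignment of ``double ascent'' or ``double descent'' to the values of $D$. In particular, each orbit contains a \emph{unique} permutation with no double ascents, namely the one in which every value of $D$ is a double descent; this is precisely the form of the underlying permutation $\bar{\pi}$ of any element of $\widetilde{\mathfrak{S}}_{n}$.

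With this in hand, I would prove bijectivity by writing down the inverse of $\Phi$ explicitly. Given $\sigma \in \mathfrak{S}_{n}$ with double-ascent set $A$ and double-descent set $B$, set $\bar{\pi} \coloneqq \varphi_{A}(\sigma)$, which turns every double ascent of $\sigma$ into a double descent and hence is the unique no-double-ascent representative of the orbit of $\sigma$; then color the double descents in $A$ red and those in $B$ blue to obtain $\pi \in \widetilde{\mathfrak{S}}_{n}$. We have $R(\pi)=A$ and $\Phi(\pi)=\varphi_{A}(\varphi_{A}(\sigma))=\sigma$. Conversely, any $\pi' \in \widetilde{\mathfrak{S}}_{n}$ with $\Phi(\pi')=\sigma$ has $\overline{\pi'}$ equal to the unique no-double-ascent element of the orbit of $\sigma$, and its red set must be exactly the set of values toggled to double ascents, namely $A$; hence the preimage is unique and $\Phi$ is a bijection. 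Finally, since $\bar{\pi}$ and $\Phi(\pi)=\sigma$ lie in the same orbit, they share the same $\pk$ and the same $\dbl$ by the orbit-invariance above; as the statistics $\pk$ and $\dbl$ of a colored permutation are by convention those of its underlying permutation, $\Phi$ preserves $(\pk,\dbl)$, completing the proof.
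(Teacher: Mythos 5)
Your proposal is correct and takes essentially the same route as the paper's proof: both rest on the fact that valley-hopping toggles a letter between double ascent and double descent while leaving the type of every other letter unchanged, and both exhibit the same explicit inverse $\sigma \mapsto \varphi_{\Dasc(\sigma)}(\sigma)$ with the red/blue coloring recording exactly which double descents get hopped. The only difference is one of detail: you spell out the orbit structure (each orbit has $2^{|D|}$ elements and a unique double-ascent-free representative) to justify uniqueness of the preimage and the $(\pk,\dbl)$-preservation, points the paper disposes of by appeal to the ``easy fact'' that valley-hopping preserves these statistics.
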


\begin{proof}
The inverse $\Phi^{-1}$ of the map $\Phi$ can be described in the
following way. Let $\Dasc(\pi)$ be the set of double ascents of $\pi$
and $\Ddes(\pi)$ the set of double descents of $\pi$. If $S\subseteq\Ddes(\pi)$
and if $\pi$ has no double ascents, then let $\pi^{S}$ be the permutation
in $\widetilde{\mathfrak{S}}_{n}$ obtained by coloring the double descents
in $S$ blue and all other double descents red. Given a permutation
$\pi$ in $\mathfrak{S}_{n}$, let $\Phi^{-1}(\pi)=(\varphi_{\Dasc(\pi)}(\pi))^{\Ddes(\pi)}$.
Then $\Phi$ is a bijection between $\widetilde{\mathfrak{S}}_{n}$ and
$\mathfrak{S}_{n}$. The claim that $\Phi$ preserves the $\pk$ and
$\dbl$ statistics follows from the easy fact that valley-hopping
preserves these statistics as well.
\end{proof}

\section{\label{s-hopping}Hopping from Chebyshev polynomials to permutation
statistics}

The Chebyshev polynomials of the second kind $\{U_n(t)\}_{n\geq0}$ are defined by the recurrence $U_n(t)\coloneqq 2tU_{n-1}(t)-U_{n-2}(t)$ for $n\geq 2$ with initial values $U_0(t)=1$ and $U_1(t)=2t$. These polynomials are an important sequence of orthogonal polynomials arising in many branches of mathematics; see \cite{Andrews1999,Rain60,Rivlin1990} for several notable references.

It will be more convenient for us to use a two-parameter variant $\{U_n(s,t)\}_{n\geq0}$ of the Chebyshev polynomials of the second kind. We define $U_n(s,t)$ by the recurrence
\begin{equation}\label{e-Ust_rec}
	U_n(s,t)=2tU_{n-1}(s,t)-sU_{n-2}(s,t)
\end{equation}
for $n\geq2$ with initial values $U_0(s,t)=1$ and $U_1(s,t)=2t$. The first several of these bivariate Chebyshev polynomials are given in the following table.
\begin{center}
\begin{tabular}{ccc}
	\begin{tabular}{c|c}
		$n$ & $U_n(s,t)$\tabularnewline
		\hline 
		0 & 1\tabularnewline
		1 & $2t$\tabularnewline
		2 & $4t^2-s$\tabularnewline
		3 & $8t^3-4st$\tabularnewline
	\end{tabular}
	& &
	\begin{tabular}{c|c}
		$n$ & $U_n(s,t)$\tabularnewline
		\hline 
		4 & $16t^4-12st^2+s^2$\tabularnewline
		5 & $32t^5-32st^3+6s^2t$\tabularnewline
		6 & $64t^6-80st^4+24s^2t^2-s^3$\tabularnewline
		7 & $128t^7-192st^5+80s^2t^3-8s^3t$\tabularnewline
	\end{tabular}
\end{tabular}
\end{center}

Note that these polynomials are related to the usual Chebyshev polynomials of the second kind by the formulas $U_n(t)=U_n(1,t)$ and $U_n(s,t)=U_n(s^{-1/2}t)s^{n/2}$. The numbers $f_{n}=(n+1)\negthickspace\mod2$ and the Pell numbers $g_{n}$ are specializations of the $U_n(s,t)$, as $f_{n}=U_{n-2}(-1,0)$ and $g_{n}=U_{n-2}(-1,1)$ for all $n\geq2$. From the recurrence (\ref{e-Ust_rec}), it is not hard to see that the ordinary generating function for our $U_{n}(s,t)$ is given by the formula 
\[
\sum_{n=0}^{\infty}U_{n}(s,t)x^{n}=\frac{1}{1-2tx+sx^{2}},
\]
and that $U_{n}(s,t)$ counts tilings of a $1\times n$ rectangle with two types of monominoes, each weighted $t$, and one type of domino, each weighted $-s$. (The ordinary Chebyshev polynomials of the second kind count the same tilings but with dominoes weighted $-1$; see \cite{Benjamin2009} for an accessible reference.)

An expression for the exponential generating function of the $U_n(t)$ is known (see~\cite[p.~301]{Rain60}) and together with the formula $U_n(s,t)=U_n(s^{-1/2}t)s^{n/2}$, we obtain 
\begin{equation}\label{e-Uegf}
\sum_{n=0}^\infty U_n(s,t)\frac{x^{n+1}}{(n+1)!}=e^{xt}\frac{\sinh{(x\sqrt{t^2-s})}}{\sqrt{t^2-s}}.
\end{equation}
We will find it more convenient to work with the exponential generating function
\[
V(s,t;x)\coloneqq\sum_{n=0}^{\infty}U_{n}(s,t)\frac{x^{n+2}}{(n+2)!}=\frac{x^{2}}{2!}+2t\frac{x^{3}}{3!}+(4t^{2}-s)\frac{x^{4}}{4!}+\cdots.
\]
Note that
\[
F(x)=\sum_{n=0}^{\infty}f_{n}\frac{x^{n}}{n!}=1+V(-1,0;x)
\quad \mathrm{and} \quad
G(x)=\sum_{n=0}^{\infty}g_{n}\frac{x^{n}}{n!}=1+V(-1,1;x).
\]
It follows from~\eqref{e-Uegf} that $V(s,t;x)$ has the closed-form expression
\begin{equation}
V(s,t;x)=\frac{1}{s}\left(1-\cosh(x\sqrt{t^{2}-s})e^{tx}+\frac{te^{tx}\sinh(x\sqrt{t^{2}-s})}{\sqrt{t^{2}-s}}\right).\label{e-Vcf}
\end{equation}

\subsection{A Chebyshev formula for the bidistribution \texorpdfstring{$(\protect\pk,\protect\dbl)$}{(pk, dbl)}}

We now present our main theorem from this section.
\begin{thm}
\label{t-pkdblV} ${\displaystyle P^{(\pk,\dbl)}(s,t;x)=\frac{\frac{\partial}{\partial x}V(s,t;x)}{1-sV(s,t;x)}}$
\end{thm}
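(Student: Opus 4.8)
The plan is to prove the theorem combinatorially by reinterpreting both sides as weighted collections of labeled descending runs and then cancelling the signed ``spurious'' terms by an involution. Write $V'\coloneqq\frac{\partial}{\partial x}V(s,t;x)$. First I would invoke Lemma~\ref{l-pkdblbij}: since $\Phi$ is $(\pk,\dbl)$-preserving and every $\sigma\in\widetilde{\mathfrak{S}}_m$ satisfies $\dbl(\sigma)=\ddes(\sigma)$, summing over the red/blue choice at each double descent turns the colors into a factor of $2t$ per double descent, giving
\[
P_m^{(\pk,\dbl)}(s,t)=\sum_{\sigma\in\widetilde{\mathfrak{S}}_m}s^{\pk(\sigma)}t^{\dbl(\sigma)}=\sum_{\substack{\tau\in\mathfrak{S}_m\\\dasc(\tau)=0}}s^{\pk(\tau)}(2t)^{\ddes(\tau)}.
\]
The weight $2t$ per double descent is exactly the weight of a monomino in the tiling model of $U_n(s,t)$ from Section~\ref{s-hopping}; this coincidence is what drives the argument.

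Next I would expand $\dfrac{V'}{1-sV}=V'\sum_{k\ge0}(sV)^{k}$ and extract $[x^m/m!]$ using that model. Each factor $V=\sum_j U_{j-2}(s,t)\,x^j/j!$ contributes a labeled block of size $j$ carrying a monomino--domino tiling of length $j-2$ (monominoes weighted $2t$, dominoes weighted $-s$), and likewise $V'$ contributes a block with a tiling of length $j-1$. Thus $[x^m/m!]\,V'(sV)^k$ enumerates ordered set partitions of $[m]$ into a distinguished $V'$-block of size $\ge1$ followed by $k$ blocks of size $\ge2$, weighted by $s^k$ times the product of the tiling weights. I would read each block as the decreasing arrangement of its labels: the $V'$-block as the first (peakless) descending run of a prospective permutation, and each size-$\ge2$ block as a later run headed by a marked peak, the $s$'s counting these peaks. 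The all-monomino terms then reproduce, letter for letter, the decreasing-run decomposition of a permutation with weight $s^{\pk}(2t)^{\ddes}$, with the $j-2$ (resp.\ $j-1$) monominoes playing the role of the interior double descents of the run. Crucially, however, these run-sequences are unconstrained: an all-monomino configuration in which a marked head $\max(R_{i+1})$ is smaller than the preceding valley $\min(R_i)$ contributes a monomial with a positive power of $s$ but corresponds to no permutation.

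The heart of the proof---and the step I expect to be the main obstacle---is a sign-reversing, $(\pk,\dbl)$-preserving involution that cancels precisely these bad configurations. I would scan left to right for the first \emph{defect}, meaning either a domino or a junction with $\min(R_i)>\max(R_{i+1})$, and toggle it. At such a junction the concatenation of $R_i$ and $R_{i+1}$ is already decreasing, so merging the two runs yields one legitimate decreasing run; the lost peak is recorded by inserting a domino at the seam, which replaces the factor $+s$ by $-s$ and leaves the size and all other weights unchanged. Splitting a run at a domino is the inverse operation and recreates a defect junction. A short check confirms that the two tilings of lengths $a-1$ and $c-2$ (or $a-2$ and $c-2$) merge with the length-$2$ domino into a tiling of exactly the right length, so the map is weight-reversing and size-respecting; and because a merge alters only the affected seam, the leftmost defect is preserved, making the rule a genuine involution. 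All terms containing a defect cancel in pairs, and the fixed points---configurations with no dominoes and no bad junctions---are exactly the honest descending-run decompositions of the permutations in $\widetilde{\mathfrak{S}}_m$, each contributing $s^{\pk}(2t)^{\ddes}$. Matching this with the displayed sum proves the theorem. The points needing care are the size bookkeeping of the $V$- and $V'$-blocks under merging and splitting, the verification that every split produces admissible block sizes ($\ge2$, resp.\ $\ge1$), and that the leftmost-defect convention really is involutive.
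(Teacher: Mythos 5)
Your proposal is correct and follows essentially the same route as the paper's own proof: the same coefficient extraction from $\frac{\partial_x V}{1-sV}$ via ordered set partitions into tiled blocks, the same leftmost-defect sign-reversing involution swapping dominoes (weight $-s$) with descent junctions (weight $s$), and the same final appeal to the valley-hopping bijection $\Phi$ of Lemma~\ref{l-pkdblbij}. The only cosmetic differences are that you apply $\Phi$ at the outset rather than at the end, and you use a single monomino type weighted $2t$ where the paper uses two colors each weighted $t$.
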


Setting $s=-1$ and $t=1$ in Theorem \ref{t-pkdblV} yields Theorem
\ref{t-main} (b). Observe that the numerator in Theorem~\ref{t-pkdblV} appears in Equation~\eqref{e-Uegf}, and using Equation~\eqref{e-Vcf} for the denominator we can obtain the expression 
\begin{align*} 
P^{(\pk,\dbl)}(s,t;x)=\frac1{\sqrt{t^2-s}\coth{(x\sqrt{t^2-s})}-t}.
\end{align*}
\begin{proof}
From the combinatorial interpretation of multiplication of exponential
generating functions (see, e.g., \cite[Proposition 5.1.3]{Stanley2001}),
it suffices to show that 
\begin{align}
P_{n}^{(\pk,\dbl)}(s,t) & =\sum_{k=0}^{\left\lfloor (n-1)/2\right\rfloor }s^{k}\sum_{B}U_{\left|B_{0}\right|-1}(s,t)U_{\left|B_{1}\right|-2}(s,t)\cdots U_{\left|B_{k}\right|-2}(s,t)\label{e-bigsum}
\end{align}
where the second sum is over all ordered set partitions $B$ of $[n]$
into blocks $B_{0},B_{1},\dots,B_{k}$ such that every block other than $B_{0}$ has size at least 2. Thus, the right-hand side
of Equation (\ref{e-bigsum}) counts these set partitions together
with:
\begin{itemize}
\item a tiling of an $1\times\left(\left|B_{0}\right|-1\right)$ rectangle
with two types of monominoes (colored red and blue), each weighted
$t$, and one type of domino, each weighted $-s$;
\item for each $1\leq i\leq k$, a tiling of an $1\times\left(\left|B_{i}\right|-2\right)$
rectangle with the same types of shapes and weights as above;
\end{itemize}
and each block (other than $B_{0}$) is given an additional weight
of $s$. We place an $\infty$ in the first block, write out each
block in decreasing order, and separate adjacent blocks with a bar,
as in
\[
\infty>\pi_{1}>\pi_{2}>\cdots>\pi_{\left|B_{0}\right|}\mid\pi_{\left|B_{0}\right|+1}>\cdots>\pi_{\left|B_{0}\right|+\left|B_{1}\right|}\mid\cdots\mid\pi_{n-\left|B_{k}\right|+1}>\cdots>\pi_{n}.
\]
Here, we consider the tiling on each block as being a tiling on all
but the first and last elements of the block. Now we define a sign-reversing
involution on these objects in the following way: Find the first pair
of elements $(\pi_{i},\pi_{i+1})$ where there is a domino, or where
$\pi_{i}$ and $\pi_{i+1}$ are in separate blocks and $\pi_{i}>\pi_{i+1}$.
If $(\pi_{i},\pi_{i+1}$) is covered by a domino, then we remove the
domino and insert a new bar in between $\pi_{i}$ and $\pi_{i+1}$,
thus splitting their block into two blocks. If $\pi_{i}$ and $\pi_{i+1}$
are in separate blocks and $\pi_{i}>\pi_{i+1}$, then we merge the
two blocks and cover $(\pi_{i},\pi_{i+1})$ with a domino. (See Figure
2.) This involution swaps a domino (weighted $-s$) with an additional
block (weighted $s$), and after cancellation we are left with those
objects with no dominoes and such that $\pi_{i}<\pi_{i+1}$ whenever
$\pi_{i}$ and $\pi_{i+1}$ are in separate blocks.

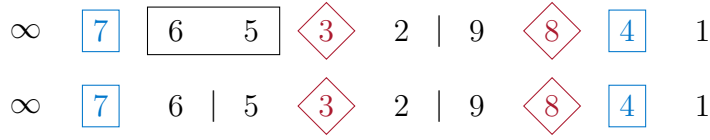
\begin{figure}
\begin{center}
	\begin{tikzpicture}[scale=1] 	
	\tikzstyle{mono1}=[draw,rectangle,inner sep=4, color=mhcblue] 	
	\tikzstyle{mono2}=[draw,diamond,inner sep=2, color=davidsonred] 
	\node at (0,0) {$\infty$}; 	
	\node[mono1] at (1,0) {7}; 
	\node at (2,0) {6}; 
	\node[draw,rectangle,minimum width=50,minimum height=18] at (2.5,0) {}; 
	\node at (3,0) {5}; 
	\node[mono2] at (4,0) {3}; 
	\node at (5,0) {2}; 
	\node at (5.5,0) {$|$}; 
	\node at (6,0) {9}; 
	\node[mono2] at (7,0) {8}; 
	\node[mono1] at (8,0) {4};
	\node at (9,0) {1}; 
	\begin{scope}[shift={(0,-1)}]
	\node at (0,0) {$\infty$}; 
	\node[mono1] at (1,0) {7}; 
	\node at (2,0) {6}; 
	\node at (2.5,0) {$|$}; 
	\node at (3,0) {5}; 
	\node[mono2] at (4,0) {3}; 
	\node at (5,0) {2}; 
	\node at (5.5,0) {$|$};
	\node at (6,0) {9}; 
	\node[mono2] at (7,0) {8};
	\node[mono1] at (8,0) {4};
	\node at (9,0) {1}; 
	\end{scope}
	\end{tikzpicture}
\end{center}
\caption{Example of the sign-reversing involution.}
\end{figure}

If we treat any one of these remaining objects $\pi=\pi_{1}\pi_{2}\cdots\pi_{n}$
as a permutation, we see that $\pi$ has no double ascents and has
each double descent colored either red or blue (depending on the color
of the corresponding monomino). Hence, $\pi$ belongs to $\widetilde{\mathfrak{S}}_{n}$
and contributes a weight of $s^{\pk(\pi)}t^{\dbl(\pi)}$ to the right-hand
side of Equation (\ref{e-bigsum}). The result then follows from applying
the $(\pk,\dbl)$-preserving bijection $\Phi$.
\end{proof}

\subsection{A Chebyshev formula for the quadruple distribution \texorpdfstring{$(\protect\pk,\protect\val,\protect\dasc,\protect\ddes)$}{(pk,val,dasc,ddes)}}

We shall now derive from Theorem \ref{t-pkdblV} an analogous result
for the joint distribution of the four statistics $\pk$, $\val$,
$\dasc$, and $\ddes$.
\begin{thm}
\label{t-pkvaldascddes} ${\displaystyle {\displaystyle P^{(\pk,\val,\dasc,\ddes)}(s,t,u,v;x)=\frac{t\frac{\partial}{\partial x}V(st,(u+v)/2;x)}{1-stV(st,(u+v)/2;x)}}}$
\end{thm}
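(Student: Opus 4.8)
The plan is to deduce Theorem~\ref{t-pkvaldascddes} from Theorem~\ref{t-pkdblV} by a substitution argument that refines the single variable $t$ tracking $\dbl=\dasc+\ddes$ into two separate variables $u$ and $v$, while simultaneously splitting the peak-tracking variable $s$ to account for the new bookkeeping. Since Theorem~\ref{t-pkdblV} already records $(\pk,\dbl)$, and $\dbl(\pi)=\dasc(\pi)+\ddes(\pi)$, the naive first move is to write $P^{(\pk,\val,\dasc,\ddes)}$ in a form where the exponent of $t$ in Theorem~\ref{t-pkdblV} is $\dasc+\ddes$ and then try to separate the two contributions. The key structural fact I would exploit is that for every permutation of $[n]$ the four statistics are not independent: each of the $n$ letters is exactly one of a peak, valley, double ascent, or double descent, so $\pk(\pi)+\val(\pi)+\dasc(\pi)+\ddes(\pi)=n$. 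Moreover, with the convention $\pi_0=\pi_{n+1}=\infty$, the number of peaks and valleys are tightly linked (the ``descent tops/bottoms'' alternate), and in fact $\val(\pi)=\pk(\pi)+1$ holds for all $\pi\in\mathfrak{S}_n$ with $n\ge 1$ under these boundary conventions. This relation is what lets a single evaluation absorb the $\val$ statistic.

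First I would make these dependencies explicit. Using $\val=\pk+1$ and $\dasc+\ddes=n-\pk-\val=n-2\pk-1$, I can rewrite the monomial
\[
s^{\pk}t^{\val}u^{\dasc}v^{\ddes}
\]
so that the combined effect is captured by a peak-weight and a $\dbl$-weight. The substitution suggested by the target formula is $s\mapsto st$ in the peak slot and $t\mapsto (u+v)/2$ in the Chebyshev second-argument slot, together with an overall factor of $t$ coming from the $\val=\pk+1$ shift. Concretely, I expect the identity
\[
P^{(\pk,\val,\dasc,\ddes)}(s,t,u,v;x)=t\,P^{(\pk,\dbl)}\!\left(st,\tfrac{u+v}{2};x\right)
\]
to hold, after which Theorem~\ref{t-pkdblV} applied to the right-hand side immediately produces the claimed closed form. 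To justify this identity I would go back to the combinatorial meaning of the Chebyshev polynomials as monomino--domino tilings: in the proof of Theorem~\ref{t-pkdblV} the two colors of monomino (each weighted $t$) encode the two colors of double descent, i.e.\ they are the mechanism by which double descents are counted. Replacing each monomino weight $t$ by the average $(u+v)/2$ is exactly the right device to split double ascents from double descents, because under valley-hopping the map $\Phi$ trades double ascents for double descents in pairs, and the symmetric weighting $(u+v)/2$ correctly distributes the total $\dbl$ count between the two refined statistics.

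The main obstacle will be verifying precisely how the refinement $t\mapsto(u+v)/2$ interacts with the sign-reversing involution and the bijection $\Phi$ from Lemma~\ref{l-pkdblbij}. In Theorem~\ref{t-pkdblV} the variable $t$ is genuinely the $\dbl$-weight, and it is marked on \emph{colored double descents} in $\widetilde{\mathfrak{S}}_n$; valley-hopping then redistributes these as either double ascents or double descents in $\mathfrak{S}_n$. I need to check that assigning weight $u$ to a monomino that ends up as a double ascent under $\Phi$ and weight $v$ to one that remains a double descent is consistent — that is, that $\Phi$ and its inverse respect the $u/v$ labeling in a way that makes the monomino weight $(u+v)/2$ the correct averaged contribution. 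The cleanest route is probably to set up a refined version of $\widetilde{\mathfrak{S}}_n$ in which each double descent carries both a color (red/blue) and a refined weight, show that the sign-reversing involution preserves this refined weight, and then track carefully that $\Phi$ converts the data into the quadruple statistic $(\pk,\val,\dasc,\ddes)$; the factor $t$ and the $st$ substitution should fall out of the $\val=\pk+1$ identity and the peak-counting part of the tiling. I expect the bookkeeping in this last step to be the only real subtlety, with everything else being a direct appeal to Theorem~\ref{t-pkdblV}.
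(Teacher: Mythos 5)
Your proposal is correct in outline, and its skeleton coincides with the paper's: both arguments reduce the theorem to Theorem~\ref{t-pkdblV} via the identity $P_{n}^{(\pk,\val,\dasc,\ddes)}(s,t,u,v)=tP_{n}^{(\pk,\dbl)}\left(st,\tfrac{1}{2}(u+v)\right)$, with the factor $t$ and the substitution $s\mapsto st$ coming from $\val=\pk+1$, exactly as you say. Where you genuinely differ is in how that identity is justified. The paper disposes of it in one sentence by invoking the reversal map $r(\pi)=\pi_{n}\pi_{n-1}\cdots\pi_{1}$, a $(\pk,\val)$-preserving involution that exchanges $\dasc$ and $\ddes$; you instead propose to re-run the tiling/sign-reversing-involution/$\Phi$ machinery of Theorem~\ref{t-pkdblV} carrying separate weights $u$ and $v$ on the two colors of double descents. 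Your route is heavier, but it is the one that actually carries the full logical weight: reversal by itself only yields the symmetry $P_{n}^{(\pk,\val,\dasc,\ddes)}(s,t,u,v)=P_{n}^{(\pk,\val,\dasc,\ddes)}(s,t,v,u)$, and a symmetric polynomial need not depend on $u,v$ only through $u+v$ (compare $u^{2}+v^{2}$ with $2\left(\tfrac{u+v}{2}\right)^{2}$); the fact one really needs is the valley-hopping structure, namely that $\Phi$ turns each red double descent into a double ascent and leaves each blue double descent a double descent while fixing the types of all other letters, so that $\dasc(\Phi(\pi))$ and $\ddes(\Phi(\pi))$ are precisely the numbers of red and blue double descents of $\pi$. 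With that single observation (immediate from the definition of $\varphi_{k}$ and implicit in the proof of Lemma~\ref{l-pkdblbij}), summing $u^{\#\mathrm{red}}v^{\#\mathrm{blue}}$ over the $2^{\dbl}$ colorings of a fixed uncolored permutation gives $(u+v)^{\dbl}=2^{\dbl}\left(\tfrac{u+v}{2}\right)^{\dbl}$, and your identity follows with Theorem~\ref{t-pkdblV} quoted as a black box\textemdash no re-examination of the tilings or the sign-reversing involution is needed. One small correction to your heuristics: $\Phi$ does not trade double ascents for double descents ``in pairs''; each red double descent individually becomes a double ascent, and it is this letter-by-letter correspondence, not any pairing or averaging subtlety, that makes the per-letter weight $(u+v)/2$ come out right.
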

\begin{proof}
First, observe that $\val(\pi)=\pk(\pi)+1$ for all permutations $\pi$,
and that the reversal $r(\pi)\coloneqq\pi_{n}\pi_{n-1}\cdots\pi_{1}$
of a permutation is a $(\pk,\val)$-preserving involution on $\mathfrak{S}_{n}$
that switches double ascents with double descents. Thus, we have
\[
P_{n}^{(\pk,\val,\dasc,\ddes)}(s,t,u,v)=tP_{n}^{(\pk,\dbl)}\left(st,\frac{1}{2}(u+v)\right)
\]
for all $n\geq1$, which proves the result in light of Theorem \ref{t-pkdblV}.
\end{proof}
Theorem \ref{t-pkvaldascddes} and the formula (\ref{e-Vcf}) for
$V(s,t;x)$ can be used together to derive the closed-form formula
\[
P^{(\pk,\val,\dasc,\ddes)}(s,t,u,v;x)=\frac{2t}{\alpha\coth(\frac{1}{2}\alpha x)-u-v}
\]
where $\alpha=\sqrt{(u+v)^{2}-4st}$. This is equivalent to a classical formula of Carlitz and Scoville \cite{Carlitz1974}; see also \cite[Exercise 1.61a]{Stanley2011}.

The following corollary states several specializations of Theorem \ref{t-pkvaldascddes}.

\begin{cor} \leavevmode
\label{c-otherstats} 
\begin{enumerate}
\item [\normalfont{(a)}] ${\displaystyle A(t;x)=\frac{\frac{\partial}{\partial x}V(t,(1+t)/2;x)}{1-tV(t,(1+t)/2;x)}}$
\item [\normalfont{(b)}] ${\displaystyle P^{\pk}(t;x)=\frac{\frac{\partial}{\partial x}V(t,1;x)}{1-tV(t,1;x)}}$
\item [\normalfont{(c)}] ${\displaystyle P^{\ddes}(t;x)=\frac{\frac{\partial}{\partial x}V(1,(1+t)/2;x)}{1-V(1,(1+t)/2;x)}}$
\end{enumerate}
\end{cor}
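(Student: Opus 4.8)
The plan is to obtain each of the three formulas as a direct specialization of Theorem~\ref{t-pkvaldascddes}, choosing the variables $s,t,u,v$ so that the weighted count $P^{(\pk,\val,\dasc,\ddes)}(s,t,u,v;x)$ collapses onto the desired single-statistic generating function. The organizing principle is that in Theorem~\ref{t-pkvaldascddes} the four statistics are tracked by $s\leftrightarrow\pk$, $t\leftrightarrow\val$, $u\leftrightarrow\dasc$, and $v\leftrightarrow\ddes$, and the right-hand side depends on these only through the combinations $st$ and $(u+v)/2$. So for each part I first translate the target statistic into a linear relation among $\pk,\val,\dasc,\ddes$, then pick variable substitutions realizing that relation.

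For part~(b) the work is lightest: setting $t=u=v=1$ and keeping $s$ kills the $\val,\dasc,\ddes$ tracking (each becomes a harmless factor of $1$) and leaves $s^{\pk(\pi)}$, so $P^{(\pk,\val,\dasc,\ddes)}(s,1,1,1;x)=P^{\pk}(s;x)$; substituting $t=u=v=1$ on the right-hand side gives $st\mapsto s$ and $(u+v)/2\mapsto 1$, yielding exactly the stated $V(s,1;x)$ formula after renaming $s$ to $t$. For part~(c), since $\ddes$ is the only statistic I want to track, I would set $s=t=u=1$ and keep $v$; this sends $st\mapsto1$ and $(u+v)/2\mapsto(1+v)/2$, and renaming $v$ to $t$ produces the claimed expression with $V(1,(1+t)/2;x)$ in both numerator and denominator. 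For part~(a), the key identity is the classical fact that $\des(\pi)=\pk(\pi)+\ddes(\pi)$ under the conventions of this paper (every descent bottom is a peak or a double descent, using $\pi_0=\pi_{n+1}=\infty$); hence $t^{\des(\pi)}=t^{\pk(\pi)}t^{\ddes(\pi)}$, and I would substitute $s=t$, the middle variable equal to $1$, $u=1$, and $v=t$ into Theorem~\ref{t-pkvaldascddes}. This gives $st\mapsto t$ and $(u+v)/2\mapsto(1+t)/2$, matching the stated formula.

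The step I expect to require the most care is verifying the combinatorial identity $\des=\pk+\ddes$ under the $\pi_0=\pi_{n+1}=\infty$ convention used here, since this is what makes part~(a) work and it is sensitive to the nonstandard boundary conventions (the footnote in Section~2 flags exactly this subtlety). I would confirm it by the classification already established in the text: every letter $\pi_i$ is exactly one of peak, valley, double ascent, double descent, and a descent occurs precisely when $\pi_i>\pi_{i+1}$, which holds exactly at peaks and double descents but not at valleys or double ascents. A secondary point to check is that the overall factor of $t$ in Theorem~\ref{t-pkvaldascddes} is correctly accounted for under each specialization; tracing the proof of that theorem, the factor $t$ arises from $\val=\pk+1$, so it contributes a multiplicative $t$ that must cancel or be absorbed appropriately. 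For parts~(b) and~(c) the leading $t$ specializes to $1$ and disappears; for part~(a) one must confirm the leading factor becomes $1$ as well under the choice of middle variable equal to $1$.

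Finally, I would remark that each resulting identity can be cross-checked against the known closed forms: part~(b) against the stated formula for $P^{\pk}(t;x)$ in the Introduction, and part~(a) against the expression for $A(t;x)$, using the closed form~\eqref{e-Vcf} for $V(s,t;x)$. These checks are routine algebraic simplifications rather than genuine obstacles, but they provide a useful confirmation that the variable substitutions have been chosen correctly.
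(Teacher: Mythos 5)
Your proposal is correct and takes essentially the same route as the paper, which presents Corollary~\ref{c-otherstats} precisely as specializations of Theorem~\ref{t-pkvaldascddes} (with part~(a) resting on the identity $\des=\pk+\ddes$ under the $\pi_{0}=\pi_{n+1}=\infty$ convention, and the leading factor of $t$ vanishing because the valley variable is set to $1$ in all three cases). Your substitutions and the verification of $\des=\pk+\ddes$ are exactly the intended argument.
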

Further specializing Corollary \ref{c-otherstats} (a)
at $t=-1$ implies Theorem \ref{t-main} (a).

Next we show that the $(-1)$-evaluation of the double descent distribution
over $\mathfrak{S}_{2n+1}$ gives the tangent number $E_{2n+1}$.
\begin{thm}
\label{t-ddesformula} For all $n\geq1$, we have
\[
P_{n}^{\ddes}(-1)=\begin{cases}
E_{n}, & \mbox{if }n\text{ is odd,}\\
0, & \mbox{if }n\text{ is even.}
\end{cases}
\]
Thus $P^{\ddes}(-1;x)=\tan(x)$.
\end{thm}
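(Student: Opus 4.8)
The plan is to read the result off from Corollary~\ref{c-otherstats}(c) by specializing $t$ to $-1$ and then extracting Taylor coefficients. Setting $t=-1$ sends the second argument $(1+t)/2$ of $V$ to $0$, so the whole computation collapses to evaluating $V(1,0;x)$ and substituting it into
\[
P^{\ddes}(-1;x)=\frac{\frac{\partial}{\partial x}V(1,0;x)}{1-V(1,0;x)}.
\]

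First I would compute $V(1,0;x)$, for which there are two equally short routes. One is to substitute $s=1$, $t=0$ directly into the closed form~\eqref{e-Vcf}: here $t^{2}-s=-1$, so $\sqrt{t^{2}-s}=i$ and $\cosh(xi)=\cos x$, while the final term carries a factor of $t$ and hence vanishes, leaving $V(1,0;x)=1-\cos x$. One should note that no genuine indeterminacy arises, since the vanishing factor $t$ is paired against the nonzero denominator $\sqrt{t^{2}-s}=i$. The other route avoids complex numbers entirely: the recurrence~\eqref{e-Ust_rec} gives $U_{n}(1,0)=-U_{n-2}(1,0)$ with $U_{0}(1,0)=1$ and $U_{1}(1,0)=0$, so $U_{n}(1,0)$ vanishes for odd $n$ and equals $(-1)^{n/2}$ for even $n$; summing the defining series for $V$ then yields $V(1,0;x)=\sum_{m\geq0}(-1)^{m}x^{2m+2}/(2m+2)!=1-\cos x$.

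With $V(1,0;x)=1-\cos x$ in hand, the substitution is immediate:
\[
P^{\ddes}(-1;x)=\frac{\sin x}{1-(1-\cos x)}=\frac{\sin x}{\cos x}=\tan x.
\]
Finally I would extract coefficients. Since $\sec x+\tan x=\sum_{n\geq0}E_{n}x^{n}/n!$ splits into its even part $\sec x$ and odd part $\tan x$, the coefficient of $x^{n}/n!$ in $\tan x$ is $E_{n}$ for odd $n$ and $0$ for even $n$; comparing this with $P^{\ddes}(-1;x)=\sum_{n\geq1}P_{n}^{\ddes}(-1)x^{n}/n!$ delivers the stated evaluation of $P_{n}^{\ddes}(-1)$.

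There is essentially no obstacle here beyond bookkeeping, since Corollary~\ref{c-otherstats}(c) already does the combinatorial heavy lifting. The only point demanding a moment of care is the evaluation of~\eqref{e-Vcf} at $t=0$, where the appearance of $\sqrt{-1}$ might superficially look problematic; this is precisely why the recurrence-based computation of $V(1,0;x)$ is worth keeping as an independent confirmation.
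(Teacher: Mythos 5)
Your proof is correct, but after the shared starting point of Corollary~\ref{c-otherstats}(c) at $t=-1$ it diverges from the paper's argument. You evaluate the generating function outright: you compute $V(1,0;x)=1-\cos x$ (by either the closed form~\eqref{e-Vcf} or the recurrence~\eqref{e-Ust_rec}), substitute to obtain $P^{\ddes}(-1;x)=\sin x/\cos x=\tan x$, and read off coefficients from the defining series $\sec x+\tan x=\sum_{n\geq0}E_{n}x^{n}/n!$ by parity. The paper never computes $V(1,0;x)$ at all; instead it observes that Corollary~\ref{c-otherstats}(c) at $t=-1$ and Theorem~\ref{t-pkdblV} at $(s,t)=(1,0)$ have identical right-hand sides, so $P_{n}^{\ddes}(-1)=P_{n}^{(\pk,\dbl)}(1,0)$, which counts permutations of $[n]$ with no double ascents and no double descents; there are none for even $n$, and for odd $n$ these are exactly the alternating permutations, which are classically known to number $E_{n}$. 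Your route is more self-contained analytically --- it needs only the $\sec+\tan$ definition of the Euler numbers and no facts about alternating permutations --- and your recurrence-based confirmation of $V(1,0;x)$, which sidesteps the cosmetic appearance of $\sqrt{t^{2}-s}=i$ in~\eqref{e-Vcf}, is handled correctly (the factor $t=0$ kills the offending term against a nonzero denominator). What the paper's route buys is a combinatorial explanation of \emph{why} tangent numbers appear here --- the objects surviving the specialization are precisely the alternating permutations --- which fits the paper's stated aim of combinatorially-flavored proofs; your route instead delivers the identity $P^{\ddes}(-1;x)=\tan x$ by direct calculation, with the coefficient formula falling out as a corollary rather than as the primary object.
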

\begin{proof}
Comparing Theorem \ref{t-pkdblV} with Corollary \ref{c-otherstats} (c), we have $P^{\ddes}(-1;x)=P^{(\pk,\dbl)}(1,0;x)$,
which implies $P_{n}^{\ddes}(-1)=P_{n}^{(\pk,\dbl)}(1,0)$ for all
$n\geq1$. Observe that $P_{n}^{(\pk,\dbl)}(1,0)$ is the number of
permutations in $\mathfrak{S}_{n}$ with no double ascents or double
descents. It is easy to see that there are no such permutations for
even $n$, and the only such permutations for odd $n$ are \textit{alternating
permutations}: permutations $\pi=\pi_{1}\pi_{2}\cdots\pi_{n}$ satisfying
$\pi_{1}<\pi_{2}>\pi_{3}<\pi_{4}>\cdots<\pi_{n}$. It is well known
that there are $E_{n}$ alternating permutations in $\mathfrak{S}_{n}$,
and the proof follows.
\end{proof}
Similar reasoning can be used to prove the formula (\ref{e-A-1})
for Eulerian polynomials evaluated at $t=-1$.

\section{\label{s-cyc}Counting derangements by cyclic statistics}

Recall that a \textit{derangement} is a permutation with no fixed
points, i.e., a permutation for which $\pi_{i}\neq i$ for all $i$.
Let $\mathfrak{D}_{n}$ be the set of derangements in $\mathfrak{S}_{n}$.
Our goal in this section is to provide an analogous treatment of the
material from the previous section but for counting derangements with
respect to several ``cyclic statistics'' that we will define shortly.

When writing permutations in cycle notation, we adopt the convention
of writing each cycle with its largest letter in the first position,
and writing the cycles from left-to-right in increasing order of their
largest letters. (This convention is sometimes called \textit{canonical
cycle representation}.) For example, the permutation $\pi=649237185$
in one-line notation is written as $\pi=(42)(716)(8)(953)$ in cycle
notation.

Given $\pi=\pi_{1}\pi_{2}\cdots\pi_{n}$, we say that $\pi_{i}$ is:
\begin{itemize}
\item a \textit{cyclic peak} if $i<\pi_{i}>\pi_{\pi_{i}}$;
\item a \textit{cyclic valley} if $i>\pi_{i}<\pi_{\pi_{i}}$;
\item a \textit{cyclic double ascent} if $i<\pi_{i}<\pi_{\pi_{i}}$;
\item a \textit{cyclic double descent} if $i>\pi_{i}>\pi_{\pi_{i}}$.
\end{itemize}
Every letter of a derangement is either a cyclic peak, cyclic valley,
cyclic double ascent, or cyclic double descent. Define $\cpk(\pi)$,
$\cval(\pi)$, $\cdasc(\pi)$, and $\cddes(\pi)$ to be the number
of cyclic peaks, cyclic valleys, cyclic double ascents, and cyclic
double descents of $\pi$, respectively. 

These ``cyclic statistics'' were studied earlier by, e.g., Zeng \cite{Zeng1993}, Shin and
Zeng \cite{Shin2012}, and Sun and Wang \cite{Sun2014}.\footnote{Chow et al.\ \cite{Chow2014} also derived various formulas for counting
permutations by cyclic peaks and cyclic valleys, but their definitions
for these statistics differ from ours in that they do not allow the
first or last letter of a cycle to be a cyclic peak or cyclic valley.} These statistics are also closely related to a classical permutation
statistic, the excedance number. We say that $i\in[n]$ is an \textit{excedance}
of $\pi$ if $i<\pi_{i}$ and let $\exc(\pi)$ denote the number of
excedances of $\pi$. Then $i$ is an excedance of $\pi$ if and only
if $i$ is a cyclic valley or cyclic double ascent of $\pi$, and
it is well-known that the excedance number $\exc$ and the descent
number $\des$ are equidistributed over $\mathfrak{S}_{n}$.

Define the map $o\colon\mathfrak{S}_{n}\rightarrow\mathfrak{S}_{n}$,
where the input is a permutation in canonical cycle representation
and the output is a permutation in one-line notation, by erasing the
parentheses. Continuing the example with $\pi=(42)(716)(8)(953)$,
we have $o(\pi)=427168953$. The map $o$ is often called Foata's \textit{transformation fondamentale} and first appeared in \cite{Foata1965} (see also \cite{Foata1970}). It is easy to see that the transformation fondamentale is a
bijection; we can recover the cycles of $\pi$ by noting the left-to-right
maxima of $o(\pi)$: given a permutation $\sigma=\sigma_1 \sigma_2 \cdots \sigma_n$, we say that $\sigma_{i}$ is a \textit{left-to-right maximum} of $\sigma$ if $\sigma_{j}<\sigma_{i}$
for all $1\leq j<i$.  

Our work in this section will rely on a cyclic variant of
valley-hopping introduced in \cite{Sun2014}. Define $\theta_{k}\colon\mathfrak{D}_{n}\rightarrow\mathfrak{D}_{n}$
by $\theta_{k}(\pi)=o^{-1}(\varphi_{k}(o(\pi)))$, where the $0$th
letter of $o(\pi)$ is treated as 0 rather than $\infty$. Similarly,
for a subset $S\subseteq[n]$, define $\theta_{S}\colon\mathfrak{D}_{n}\rightarrow\mathfrak{D}_{n}$
by $\theta_{S}=\prod_{k\in S}\theta_{k}$. Then the \textit{cyclic
modified Foata\textendash Strehl action} (or \textit{cyclic valley-hopping})
is the $\mathbb{Z}_{2}^{n}$-action defined by the involutions $\theta_{S}$.
It is easy to see that cyclic valley-hopping toggles between cyclic
double ascents and cyclic double descents, but does not change cyclic
peaks or cyclic valleys.

Let $\widetilde{\mathfrak{D}}_{n}$ denote the set of derangements of
$[n]$ with no cyclic double ascents and where each cyclic double
descent is assigned one of two colors: red or blue. Then cyclic valley-hopping
induces a map $\mathring{\Phi}$ from $\widetilde{\mathfrak{D}}_{n}$
to $\mathfrak{D}_{n}$ defined in the analogous way as the map $\Phi$
from Section 2, but with $R(\pi)$ being the set of red cyclic double
descents. It then follows from the same reasoning as in the proof
of Lemma \ref{l-pkdblbij} that $\mathring{\Phi}$ is a $(\cpk,\cdbl)$-preserving
bijection, where $\cdbl(\pi)\coloneqq\cdasc(\pi)+\cddes(\pi)$ is
the total number of cyclic double ascents and cyclic double descents
of $\pi$.

\subsection{A cyclic analogue of Theorem 3 for derangements}

For permutation statistics $\st_{1},\st_{2},\dots,\st_{m}$ and variables
$t_{1},t_{2},\dots,t_{m}$, we define the polynomials $\{D_{n}^{(\st_{1},\st_{2},\dots\st_{m})}(t_{1},t_{2},\dots,t_{m})\}_{n\geq0}$
by 
\[
D_{n}^{(\st_{1},\st_{2},\dots\st_{m})}(t_{1},t_{2},\dots,t_{m})\coloneqq\sum_{\pi\in\mathfrak{D}_{n}}t_{1}^{\st_{1}(\pi)}t_{2}^{\st_{2}(\pi)}\cdots t_{m}^{\st_{m}(\pi)}.
\]
and we let
\[
D^{(\st_{1},\st_{2},\dots\st_{m})}(t_{1},t_{2},\dots,t_{m};x)\coloneqq1+\sum_{n=1}^{\infty}D_{n}^{(\st_{1},\st_{2},\dots\st_{m})}(t_{1},t_{2},\dots,t_{m})\frac{x^{n}}{n!}
\]
be their exponential generating function.\footnote{As before, if we have a single statistic $\st$, we write these simply
as $D_{n}^{\st}(t)$ and $D^{\st}(t;x)$.} 
These encode the distributions of permutation statistics over derangements.

We now present a cyclic analogue of Theorem~\ref{t-pkdblV} for derangements.
\begin{thm}
\label{t-cpkcdblV} ${\displaystyle D^{(\cpk,\cdbl)}(s,t;x)=\frac{1}{1-sV(s,t;x)}}$
\end{thm}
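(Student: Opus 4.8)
The plan is to mirror the proof of Theorem~\ref{t-pkdblV} as closely as possible, adapting it to the cyclic setting via the transformation fondamentale $o$ and the cyclic valley-hopping bijection $\mathring{\Phi}$. As before, I would invoke the combinatorial interpretation of multiplication of exponential generating functions to reduce the claimed identity to a combinatorial formula for $D_n^{(\cpk,\cdbl)}(s,t)$. Writing $\frac{1}{1-sV(s,t;x)}$ as the geometric series $\sum_{k\geq0}s^k V(s,t;x)^k$ and recalling that $V(s,t;x)=\sum_{n\geq0}U_n(s,t)\frac{x^{n+2}}{(n+2)!}$, I expect the reduction to yield
\[
D_n^{(\cpk,\cdbl)}(s,t)=\sum_{k=0}^{\lfloor n/2\rfloor}s^k\sum_{B}U_{|B_1|-2}(s,t)U_{|B_2|-2}(s,t)\cdots U_{|B_k|-2}(s,t),
\]
where now the inner sum is over ordered set partitions $B$ of $[n]$ into $k$ blocks $B_1,\dots,B_k$, \emph{each} of size at least $2$. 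The key structural difference from Theorem~\ref{t-pkdblV} is the absence of a distinguished first block $B_0$ (which in the non-cyclic case carried a $U_{|B_0|-1}$ factor and no weight-$s$): here the constant term $1$ in $D^{(\cpk,\cdbl)}$ and the pure power $\frac{1}{1-sV}$ reflect that every block is an ``ordinary'' cycle-block weighted by $s$.

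Next I would give the combinatorial interpretation of the right-hand side and run the same sign-reversing involution. Each block $B_i$ carries a tiling of a $1\times(|B_i|-2)$ rectangle with two colors of monominoes (each weighted $t$) and dominoes (each weighted $-s$), together with an extra weight $s$ per block. I would arrange the objects to look like derangements in canonical cycle notation: place the blocks in the partition as the cycles, write each cycle with its largest element first, treat the tiling on a block as living on all but the first and last letters, and treat the $0$th letter as $0$ (matching the convention in the definition of $\theta_k$). The same sign-reversing involution---find the first domino or the first adjacent pair across a block boundary that is a descent, then swap a domino for a block-split---cancels all terms with dominoes and all ``illegal'' boundaries. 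The surviving objects are precisely the colored derangements in $\widetilde{\mathfrak{D}}_n$: no cyclic double ascents, each cyclic double descent colored red or blue, contributing $s^{\cpk(\pi)}t^{\cdbl(\pi)}$. Applying the $(\cpk,\cdbl)$-preserving bijection $\mathring{\Phi}$ then finishes the proof.

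The main obstacle I anticipate is \emph{pinning down the cyclic dictionary precisely}: verifying that after erasing parentheses via $o$, the combinatorial data (blocks as cycles written largest-first, tilings on interior positions, the $0$-at-position-$0$ convention) translates correctly into the one-line cyclic-statistic language, so that a monomino of a given color really does correspond to a colored cyclic double descent and that peaks/valleys in the partition picture match cyclic peaks/valleys. In the non-cyclic proof the distinguished block $B_0$ begins with $\infty$, which cleanly encodes the $\pi_0=\infty$ convention; in the cyclic case each block is a cycle and the largest-first convention plays the role of the boundary, so I would need to check carefully that the first and last letters of each block behave correctly as non-statistic-bearing positions and that no spurious cyclic peak or valley is created or destroyed at the seams. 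Once this dictionary is confirmed, the involution and the invocation of $\mathring{\Phi}$ proceed exactly as in the proof of Theorem~\ref{t-pkdblV}, so I expect the remainder of the argument to be routine.
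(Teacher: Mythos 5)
Your setup matches the paper's proof: the reduction via multiplication of exponential generating functions to the identity for $D_n^{(\cpk,\cdbl)}(s,t)$, the tiling interpretation with a weight $s$ per block, the sign-reversing involution exchanging dominoes with descent boundaries, and the final appeal to $\mathring{\Phi}$ are all exactly as in the paper. The gap is in the step you expect to be routine: the surviving objects are \emph{not} ``precisely the colored derangements in $\widetilde{\mathfrak{D}}_n$'' if you treat the blocks of the ordered set partition directly as the cycles. After cancellation, what survives is an ordered sequence of decreasing blocks with an ascent at each boundary, and this only forces the first letter of each block to exceed the \emph{last letter of the preceding block}, not all preceding letters --- so the surviving objects need not be canonical cycle representations. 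Concretely, take $n=4$ and the blocks $\{4,1\}$ and $\{3,2\}$: both orderings $41\mid 32$ and $32\mid 41$ survive your involution (the boundaries $1<3$ and $2<4$ are ascents), yet as sets of cycles they give the same derangement $(41)(32)$, so under your dictionary it is counted twice. Meanwhile the derangement $(4132)\in\widetilde{\mathfrak{D}}_4$, all of whose letters are cyclic peaks or cyclic valleys, is never produced at all, since under your dictionary every cycle word is decreasing and $4132$ is not. So the map from surviving objects to $\widetilde{\mathfrak{D}}_n$ is neither injective nor surjective, and the proof breaks exactly at the sentence ``The surviving objects are precisely the colored derangements in $\widetilde{\mathfrak{D}}_n$.''

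The paper closes this gap with one additional idea, which is the genuinely new content of this proof relative to Theorem~\ref{t-pkdblV}: after cancellation, it \emph{merges} two adjacent blocks whenever the first element of the second block is not a left-to-right maximum of the underlying word. This operation is reversible (split each cycle word back into its maximal decreasing runs) and turns each surviving ordered-block object into a permutation correctly written in canonical cycle representation: in the example above, $41\mid 32$ merges into the single cycle $(4132)$, while $32\mid 41$ stays as $(32)(41)$, so the two surviving orderings account exactly for the two distinct derangements. Under this merged reading, each original block still contributes exactly one cyclic peak (its largest letter) carrying the weight $s$, its monominoes become colored cyclic double descents, and no cyclic double ascents arise, so each object is weighted $s^{\cpk(\pi)}t^{\cdbl(\pi)}$; only then can you invoke $\mathring{\Phi}$. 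You did flag the ``cyclic dictionary'' as the main obstacle, but it cannot be repaired by carefully checking your blocks-as-cycles correspondence; it requires this merging bijection as an extra step.
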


\begin{proof}
It suffices to show that 
\begin{align}
D_{n}^{(\cpk,\cdbl)}(s,t) & =\sum_{k=0}^{\left\lfloor n/2\right\rfloor }s^{k}\sum_{B}U_{\left|B_{1}\right|-2}(s,t)\cdots U_{\left|B_{k}\right|-2}(s,t)\label{e-bigsumD}
\end{align}
where the second sum is over all ordered set partitions $B$ of $[n]$
into parts $B_{1},\dots,B_{k}$. We interpret the right-hand side
of (\ref{e-bigsumD}) as in the proof of Theorem \ref{t-pkdblV} (without the initial block $B_0$ with an $\infty$) and
apply the same sign-reversing involution; the objects that remain
after cancellation are of the form 
\[
c_{1}>c_{2}>\cdots>c_{\left|B_{0}\right|}\mid c_{\left|B_{0}\right|+1}>c_{\left|B_{0}\right|+2}>\cdots>c_{\left|B_{0}\right|+\left|B_{1}\right|}\mid\cdots\mid c_{n-\left|B_{k}\right|+1}>\cdots>c_{n}
\]
with no dominoes and such that $c_{i}<c_{i+1}$ whenever $c_{i}$
and $c_{i+1}$ are in separate blocks.

Now, rather than treating these remaining objects as permutations
in one-line notation, we want to treat them as permutations in cycle
notation with blocks corresponding to cycles. In doing so, we merge
two adjacent blocks whenever the first element of the second block
in the pair is not larger than all elements from all preceding blocks,
i.e., whenever the element is not a left-to-right maximum of the underlying
permutation written in one-line notation; this guarantees that the
resulting permutations are correctly written in canonical cycle representation and is clearly reversible. 

Moreover, these permutations are derangements because each block has
size at least 2, and they have no cyclic double ascents and have each
cyclic double descent colored either red or blue (depending on the
color of the corresponding monomino). In other words, these permutations
$\pi$ are precisely the elements of $\widetilde{\mathfrak{D}}_{n}$ and
each contributes a weight of $s^{\cpk(\pi)}t^{\cdbl(\pi)}$ to the
right-hand side of Equation (\ref{e-bigsumD}). The result then follows
from applying the $(\cpk,\cdbl)$-preserving bijection $\mathring{\Phi}$.
\end{proof}

We extend Theorem \ref{t-cpkcdblV} to an analogous result
for the joint distribution of the statistics $\cpk$, $\cval$, $\cdasc$, and $\cddes$ over $\mathfrak{D}_n$.

\begin{thm}
\label{t-cyc4stats} ${\displaystyle {\displaystyle D^{(\cpk,\cval,\cdasc,\cddes)}(s,t,u,v;x)}=\frac{1}{1-stV(st,(u+v)/2;x)}}$
\end{thm}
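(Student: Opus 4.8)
The plan is to mirror the proof of Theorem \ref{t-pkvaldascddes}, reducing the four-variable cyclic distribution to the two-variable one governed by Theorem \ref{t-cpkcdblV}. Concretely, I aim to prove the coefficient-wise identity
\[
D_n^{(\cpk,\cval,\cdasc,\cddes)}(s,t,u,v) = D_n^{(\cpk,\cdbl)}\!\left(st,\tfrac{1}{2}(u+v)\right)
\]
for all $n\geq1$; substituting this into Theorem \ref{t-cpkcdblV} immediately yields the claimed generating function. In contrast to Theorem \ref{t-pkvaldascddes}, no leading factor of $t$ appears, which is the cyclic shadow of the fact that the $\infty$-boundary is replaced by a genuine wrap-around within each cycle.

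Two structural facts drive the reduction. First I would show that $\cval(\pi)=\cpk(\pi)$ for every derangement $\pi$, the cyclic analog of $\val=\pk+1$. This follows by reading each cycle as a cyclic sequence: for a position $i=c_j$ lying in a cycle $(c_1c_2\cdots c_\ell)$, the comparisons among $i$, $\pi_i$, and $\pi_{\pi_i}$ classify $\pi_i=c_{j+1}$ as a local maximum (cyclic peak), a local minimum (cyclic valley), an ascent, or a descent of the cyclic word $c_1c_2\cdots c_\ell c_1$. Since any cyclic sequence of distinct values has equally many local maxima and local minima, summing over all cycles gives $\cpk(\pi)=\cval(\pi)$, so that $s^{\cpk(\pi)}t^{\cval(\pi)}=(st)^{\cpk(\pi)}$. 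Second, I would use the map $\pi\mapsto\pi^{-1}$, which restricts to an involution on $\mathfrak{D}_n$. A short computation relating the triples $(i,\pi_i,\pi_{\pi_i})$ for $\pi$ and for $\pi^{-1}$ shows this map preserves $\cpk$ and $\cval$ while interchanging $\cdasc$ and $\cddes$, the cyclic analog of reversal; combined with the preceding paragraph it forces the $u\leftrightarrow v$ symmetry of $\sum_{\pi\in\mathfrak{D}_n}(st)^{\cpk}u^{\cdasc}v^{\cddes}$.

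The main obstacle is that this symmetry alone does \emph{not} produce the substitution $u,v\mapsto\tfrac12(u+v)$; that requires the full orbit structure of cyclic valley-hopping established in Section 4. Since each $\theta_k$ toggles between cyclic double ascents and cyclic double descents while fixing cyclic peaks and valleys, every $\mathbb{Z}_2^{n}$-orbit has constant $\cpk$ and constant $\cdbl=d$, the set of its $d$ cyclic-double positions is preserved, and these positions may be flipped independently. Hence each orbit consists of exactly $2^{d}$ derangements realizing all $\binom{d}{a}$ ways of designating $a$ of the $d$ cyclic-double positions as ascents, so summing $u^{\cdasc}v^{\cddes}$ over an orbit gives $(u+v)^{d}$. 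Therefore
\[
\sum_{\pi\in\mathfrak{D}_n}(st)^{\cpk}u^{\cdasc}v^{\cddes}
=\sum_{\text{orbits}}(st)^{\cpk}(u+v)^{\cdbl}
=\sum_{\pi\in\mathfrak{D}_n}(st)^{\cpk}\!\left(\tfrac{1}{2}(u+v)\right)^{\cdbl},
\]
where the last equality holds because each orbit has $2^{\cdbl}$ elements. The right-hand side is exactly $D_n^{(\cpk,\cdbl)}(st,\tfrac12(u+v))$, which completes the reduction and, via Theorem \ref{t-cpkcdblV}, the proof.
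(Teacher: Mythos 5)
Your proposal is correct and takes essentially the same route as the paper: establish $\cval=\cpk$ on $\mathfrak{D}_n$, use cyclic valley-hopping to reduce the quadruple distribution to $D_n^{(\cpk,\cdbl)}\left(st,\tfrac{1}{2}(u+v)\right)$, and then apply Theorem \ref{t-cpkcdblV}. If anything, your writeup is more complete than the paper's one-line argument: the paper invokes only the single involution $\theta_S$ swapping $\cdasc$ and $\cddes$ (which, as you note, yields just the $u\leftrightarrow v$ symmetry --- so your $\pi\mapsto\pi^{-1}$ involution is likewise dispensable), whereas the substitution $u,v\mapsto\tfrac{1}{2}(u+v)$ genuinely rests on the $2^{\cdbl}$-element orbit structure of the $\mathbb{Z}_2^n$-action, which is the step you spell out explicitly via orbit averaging.
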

\begin{proof}
First, observe that $\cpk(\pi)=\cval(\pi)$ for all derangements $\pi$.
The cyclic valley-hopping map $\theta_{S}$, where $S$ is the set
containing all cyclic double ascents and cyclic double descents of
$\pi$, is a $(\cpk,\cval)$-preserving involution on $\mathfrak{D}_{n}$
that switches cyclic double ascents with cyclic double descents. Thus,
we have
\[
D_{n}^{(\cpk,\cval,\cdasc,\cddes)}(s,t,u,v)=D_{n}^{(\pk,\dbl)}\left(st,\frac{1}{2}(u+v)\right)
\]
for all $n\geq1$, which along with Theorem \ref{t-cpkcdblV} proves
the result.
\end{proof}

We can use Theorem \ref{t-cyc4stats} and Equation (\ref{e-Vcf}) to derive the formula 
\begin{equation}
{D^{(\cpk,\cval,\cdasc,\cddes)}(s,t,u,v;x)}=\frac{\alpha e^{-\frac{1}{2}(u+v)x}}{\alpha\cosh(\frac{1}{2}\alpha x)-(u+v)\sinh(\frac{1}{2}\alpha x)} \label{e-4stats}
\end{equation}
where $\alpha=\sqrt{(u+v)^{2}-4st}$.
Furthermore, given any permutation $\pi$, let $\cyc(\pi)$ denote the number of cycles of $\pi$ and let $\fix(\pi)$ be the number of fixed points of $\pi$. Standard applications of the exponential formula (see \cite[Section 5.1]{Stanley2001}) yield the identities
\begin{equation}
D^{(\cpk,\cval,\cdasc,\cddes,\cyc)}(s,t,u,v,w;x)=
D^{(\cpk,\cval,\cdasc,\cddes)}(s,t,u,v;x)^w \label{e-5stats}
\end{equation}
and
\begin{equation}
1+P^{(\cpk,\cval,\cdasc,\cddes,\cyc,\fix)}(s,t,u,v,w,y;x)=
e^{wyx}D^{(\cpk,\cval,\cdasc,\cddes,\cyc)}(s,t,u,v,w;x). \label{e-6stats}
\end{equation}
Then, combining Equations (\ref{e-4stats}), (\ref{e-5stats}), and (\ref{e-6stats}) yields the following exponential generating function formula for the sextuple distribution $(\cpk,\cval,\cdasc,\cddes,\cyc,\fix)$ over all permutations:
\[
1+P^{(\cpk,\cval,\cdasc,\cddes,\cyc,\fix)}(s,t,u,v,w,y;x) =\left(\frac{\alpha e^{(y-\frac{1}{2}(u+v))x}}{\alpha\cosh(\frac{1}{2}\alpha x)-(u+v)\sinh(\frac{1}{2}\alpha x)}\right)^{w}.
\]
An equivalent form of this formula was proven earlier by Zeng \cite[Th\'{e}or\`{e}me 1]{Zeng1993}.

\subsection{Counting derangements by excedances}

In the remainder of this section, we examine specializations of Theorem \ref{t-cyc4stats} that give rise to formulas
for individual cyclic statistics, beginning with the excedance number.

The excedance polynomials $D_{n}(t)\coloneqq D_{n}^{\exc}(t)$ have been well-studied; for example, it is known that they have exponential generating function 
\begin{align*}
D(t;x) & \coloneqq D^{\exc}(t;x)=\frac{(1-t)e^{-x}}{e^{-(1-t)x}-t}
\end{align*}
\cite{Roselle1968} and are $\gamma$-positive \cite{Athanasiadis2011/12,Shin2012,Sun2014}. From Theorem~\ref{t-cyc4stats} we obtain the following.
\begin{cor}
\label{c-exc}${\displaystyle {\displaystyle D(t;x)}=\frac{1}{1-tV(t,(1+t)/2;x)}}$
\end{cor}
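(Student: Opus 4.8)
The plan is to obtain Corollary~\ref{c-exc} as a direct specialization of Theorem~\ref{t-cyc4stats}, exploiting the relationship between excedances and the cyclic statistics recorded earlier in this section. First I would recall the fact, already observed in the excerpt, that $i \in [n]$ is an excedance of $\pi$ if and only if $i$ is a cyclic valley or a cyclic double ascent of $\pi$. Consequently
\[
\exc(\pi) = \cval(\pi) + \cdasc(\pi)
\]
for every derangement $\pi$ (indeed for every permutation, but we only need it over $\mathfrak{D}_n$).

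Next I would translate this into a generating-function identity. Weighting each cyclic valley and each cyclic double ascent by the variable $t$, while leaving cyclic peaks and cyclic double descents unweighted, assigns to each derangement $\pi$ the weight $t^{\cval(\pi)+\cdasc(\pi)} = t^{\exc(\pi)}$. Hence, in terms of the four-variable distribution, we have
\[
D^{\exc}(t;x) = D^{(\cpk,\cval,\cdasc,\cddes)}(1,\,t,\,t,\,1;\,x),
\]
the constant terms agreeing since both sides carry the $1$ coming from the empty derangement under our convention for $D$.

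Finally I would substitute $s=1$, the cyclic-valley variable $=t$, $u=t$, and $v=1$ into the right-hand side of Theorem~\ref{t-cyc4stats}. Under this substitution one has $st = t$ and $(u+v)/2 = (1+t)/2$, so that
\[
D^{(\cpk,\cval,\cdasc,\cddes)}(1,\,t,\,t,\,1;\,x) = \frac{1}{1 - tV(t,(1+t)/2;x)},
\]
which is precisely the claimed formula for $D(t;x)$. The only obstacle here is bookkeeping: correctly matching the four statistic-variables of Theorem~\ref{t-cyc4stats} against the single excedance weight and tracking the resulting arguments $st$ and $(u+v)/2$ of $V$. There is no genuine combinatorial difficulty, since all of the substantive work was already carried out in the proof of Theorem~\ref{t-cyc4stats}.
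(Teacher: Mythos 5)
Your proposal is correct and matches the paper's intent exactly: the paper states Corollary~\ref{c-exc} as a direct specialization of Theorem~\ref{t-cyc4stats}, relying on the same observation that excedances are precisely the cyclic valleys and cyclic double ascents, so that $D^{\exc}(t;x)=D^{(\cpk,\cval,\cdasc,\cddes)}(1,t,t,1;x)$. Your substitution $s=1$, cyclic-valley variable $=t$, $u=t$, $v=1$ (giving $st=t$ and $(u+v)/2=(1+t)/2$) is exactly the bookkeeping the paper leaves implicit.
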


It follows from Corollary \ref{c-exc} that the exponential generating
function for the excedance polynomials $D(t;x)$ evaluated at $t=-1$
is the reciprocal of the exponential generating function $F(x)=\cosh(x)$
for the sequence $1,0,1,0,\dots$.
\begin{cor}
$D(-1;x)=F(x)^{-1}$
\end{cor}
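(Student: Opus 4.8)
The plan is to specialize Corollary~\ref{c-exc} at $t=-1$. Setting $t=-1$ in the formula
\[
D(t;x)=\frac{1}{1-tV(t,(1+t)/2;x)}
\]
gives
\[
D(-1;x)=\frac{1}{1-(-1)V(-1,0;x)}=\frac{1}{1+V(-1,0;x)}.
\]
The key observation is then that the denominator is exactly $F(x)$. Indeed, the excerpt already records the identity $F(x)=1+V(-1,0;x)$, so the denominator equals $F(x)$ and we obtain $D(-1;x)=1/F(x)=F(x)^{-1}$, as desired.

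I would carry this out in three short steps. First, substitute $t=-1$ into Corollary~\ref{c-exc}, being careful with the arguments of $V$: the first argument $t$ becomes $-1$, and the second argument $(1+t)/2$ becomes $(1+(-1))/2=0$, so $V(t,(1+t)/2;x)$ specializes to $V(-1,0;x)$. Second, simplify the coefficient $-t=-(-1)=1$ in the denominator to rewrite $D(-1;x)=1/\bigl(1+V(-1,0;x)\bigr)$. Third, invoke the previously stated identity $F(x)=1+V(-1,0;x)$ (which comes from $f_n=U_{n-2}(-1,0)$ together with the definition of $V$) to conclude $D(-1;x)=1/F(x)$.

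Since every ingredient is already established earlier in the paper, there is no genuine obstacle here; the result is a direct algebraic specialization. The only point requiring the slightest care is tracking how the second parameter of $V$ collapses to $0$ when $t=-1$, so that the relevant specialization of $V$ matches the one appearing in the definition of $F(x)$ rather than the one appearing for the Pell numbers (which uses $V(-1,1;x)$). Once that substitution is made correctly, the identification of the denominator with $F(x)=\cosh(x)$ is immediate.
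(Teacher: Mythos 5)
Your proof is correct and follows exactly the route the paper intends: the paper derives this corollary by specializing Corollary~\ref{c-exc} at $t=-1$ and identifying the denominator $1+V(-1,0;x)$ with $F(x)$ via the identity stated in Section~\ref{s-hopping}. Your careful tracking of how the second argument $(1+t)/2$ collapses to $0$ is precisely the only point of substance, and you handle it correctly.
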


This identity can be used to rederive the classical result due to
Roselle \cite{Roselle1968} that 
\[
D_{n}(-1)=\begin{cases}
(-1)^{n/2}E_{n}, & \mbox{if }n\text{ is even,}\\
0, & \mbox{if }n\text{ is odd.}
\end{cases}
\]

We note that, from Equation~\eqref{e-Ust_rec}, one can show that
\begin{align}
	U_{n}(t,(1+t)/2)=1+t+\cdots+t^{n}\label{e-Wsimple}
\end{align}
for $n\geq1$. Then, using Equation~\eqref{e-Wsimple}, Corollary \ref{c-exc} can be obtained as a specialization of a formula of Shareshian and Wachs \cite[Equation (1.4)]{Shareshian2010} involving Eulerian quasisymmetric functions.

\subsection{Counting derangements by cyclic peaks}

Next, we examine the distribution of the cyclic peak number over derangements.

\begin{cor}
\label{c-cpk} ${\displaystyle D^{\cpk}(t;x)=\frac{1}{1-tV(t,1;x)}}$
\end{cor}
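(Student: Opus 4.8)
The plan is to obtain Corollary~\ref{c-cpk} as a direct specialization of Theorem~\ref{t-cyc4stats}, exactly in the spirit of how Corollary~\ref{c-exc} was derived. The key observation is that the cyclic peak number $\cpk$ is a single statistic in the quadruple $(\cpk,\cval,\cdasc,\cddes)$, so I should find the values of $s,t,u,v$ that isolate $\cpk$ and kill (i.e., set to $1$) the bookkeeping variables for the other three statistics. Since I want $D^{\cpk}(t;x)$, I set the variable attached to $\cpk$ equal to $t$ and the variables attached to $\cval$, $\cdasc$, and $\cddes$ all equal to $1$. In the notation of Theorem~\ref{t-cyc4stats}, whose four arguments $(s,t,u,v)$ correspond to $(\cpk,\cval,\cdasc,\cddes)$, this means substituting $s=t$, $t=1$, $u=1$, and $v=1$.

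With that substitution I read off the right-hand side of Theorem~\ref{t-cyc4stats}. The product $st$ becomes $t\cdot 1=t$, and the argument $(u+v)/2$ becomes $(1+1)/2=1$. Thus the formula
\[
D^{(\cpk,\cval,\cdasc,\cddes)}(s,t,u,v;x)=\frac{1}{1-stV(st,(u+v)/2;x)}
\]
collapses to
\[
D^{\cpk}(t;x)=\frac{1}{1-tV(t,1;x)},
\]
which is precisely the claimed identity. This mirrors exactly the computation behind Corollary~\ref{c-exc}, where the excedance specialization uses $u=v=1$ together with $s=t$ as well (since an excedance is a cyclic valley or cyclic double ascent), and indeed the two corollaries share the same denominator data, as one would hope.

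The only thing to verify carefully is that the substitution really does recover the single-statistic generating function $D^{\cpk}(t;x)$ on the left. Setting the marking variables for $\cval$, $\cdasc$, and $\cddes$ to $1$ forgets those three statistics, leaving $\sum_{\pi\in\mathfrak{D}_n} t^{\cpk(\pi)}$ as the coefficient polynomial, which is $D^{\cpk}_n(t)$ by definition; summing over $n$ with the constant term $1$ gives $D^{\cpk}(t;x)$. I do not anticipate a genuine obstacle here: the entire content is the algebraic bookkeeping of tracking which of the four arguments carries which statistic and confirming that $st$ and $(u+v)/2$ simplify correctly. The one spot that could trip up a reader is the collision of names — the external variable called $t$ in the corollary is playing the role of the first argument $s$ in Theorem~\ref{t-cyc4stats}, not its second argument — so the proof should state the substitution $(s,t,u,v)=(t,1,1,1)$ explicitly to avoid confusion.
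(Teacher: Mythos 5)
Your proof is correct and is exactly the paper's (implicit) derivation: Corollary~\ref{c-cpk} is obtained by specializing Theorem~\ref{t-cyc4stats} at $(s,t,u,v)=(t,1,1,1)$, so that $st=t$ and $(u+v)/2=1$. One parenthetical aside is off, though it does not affect your argument: Corollary~\ref{c-exc} uses the substitution $(s,t,u,v)=(1,t,t,1)$, marking $\cval$ and $\cdasc$ since $\exc=\cval+\cdasc$, which is why its $V$-argument is $(1+t)/2$ rather than $1$ --- the two corollaries do not share the same denominator data.
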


By specializing (\ref{e-4stats}) appropriately, we derive the formula
\[
D^{\cpk}(t;x)=\frac{\sqrt{1-t}e^{-x}}{\sqrt{1-t}\cosh(x\sqrt{1-t})-\sinh(x\sqrt{1-t})}.
\]
The first several polynomials $D_{n}^{\cpk}(t)$ are given in the following table.
\begin{center}
\begin{tabular}{ccc}
	\begin{tabular}{c|c}
		$n$ & $D_{n}^{\cpk}(t)$\tabularnewline
		\hline 
		1 & 0\tabularnewline
		2 & $t$\tabularnewline
		3 & $2t$\tabularnewline
		4 & $4t+5t^{2}$\tabularnewline
	\end{tabular}
	& &
	\begin{tabular}{c|c}
		$n$ & $D_{n}^{\cpk}(t)$\tabularnewline
		\hline 
		5 & $8t+36t^{2}$\tabularnewline
		6 & $16t+188t^{2}+61t^{3}$\tabularnewline
		7 & $32t+864t^{2}+958t^{3}$\tabularnewline
		8 & $64t+3728t^{2}+9656t^{3}+1385t^{4}$\tabularnewline
	\end{tabular}
\end{tabular}
\end{center}

Note that the coefficient of $t$ in $D_{n}^{\cpk}(t)$ seems to be
$2^{n-2}$ for all $n\geq2$; this is easy to explain combinatorially.
\begin{prop}
For all $n\geq2$, the number of derangements in $\mathfrak{D}_{n}$
with exactly one cyclic peak is $2^{n-2}$.
\end{prop}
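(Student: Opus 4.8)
The plan is to give a direct combinatorial proof by pinning down exactly which derangements have a single cyclic peak. The key observation is that the cyclic statistics record ordinary peak/valley behavior \emph{within each cycle}: if $b=\pi_i$ has predecessor $i$ and successor $\pi_{\pi_i}$ in its cycle, then $b$ is a cyclic peak precisely when $i<b>\pi_{\pi_i}$, i.e.\ exactly when $b$ is a cyclic local maximum of the circular sequence formed by its cycle. Thus $\cpk(\pi)$ equals the total number of cyclic local maxima taken over all cycles of $\pi$.

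First I would note that the largest element of any cycle is automatically a cyclic local maximum, hence a cyclic peak; therefore $\cpk(\pi)$ is at least the number of cycles of $\pi$. Since every permutation of $[n]$ with $n\geq 2$ has at least one cycle, a derangement with $\cpk(\pi)=1$ must consist of a single $n$-cycle in which the largest element $n$ is the unique cyclic peak. Next I would invoke the standard fact that in a circular arrangement of distinct numbers the cyclic local maxima and cyclic local minima alternate and so are equal in number; a unique cyclic local maximum then forces a unique cyclic local minimum, which must be the global minimum $1$. Consequently the circular sequence of the cycle is unimodal: starting from $n$ it strictly decreases down to $1$ and then strictly increases back up to $n$.

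The final step is to count these unimodal $n$-cycles. Such a cycle is completely determined by choosing, for each element of $\{2,3,\dots,n-1\}$, whether it lies on the descending arc (running from $n$ down to $1$) or on the ascending arc (running from $1$ back up to $n$): once this partition is fixed, each arc is forced into its monotone order, which reconstructs the cycle. This gives a bijection between the derangements being counted and the subsets of $\{2,3,\dots,n-1\}$, of which there are exactly $2^{n-2}$.

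I expect the main obstacle to be the structural characterization rather than the final count: one must argue cleanly that $\cpk(\pi)=1$ forces a single cycle, and then that a unique cyclic local maximum forces the unimodal shape, the latter resting on the local-max/local-min alternation in a circular word. Checking that the arc-assignment map is a genuine bijection (distinct subsets give distinct $n$-cycles) is routine. As a cross-check, one may instead extract the coefficient of $t^1$ from Corollary~\ref{c-cpk}: since $D^{\cpk}(t;x)=\sum_{k\geq0}t^{k}V(t,1;x)^{k}$, this coefficient equals $V(0,1;x)$, whose $x^{n}/n!$-coefficient is $U_{n-2}(0,1)=2^{n-2}$, matching the combinatorial count. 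The same total also follows from the bijection $\mathring{\Phi}$, since the elements of $\widetilde{\mathfrak{D}}_{n}$ with one cyclic peak all share the single uncolored underlying cycle $n\to n-1\to\cdots\to1\to n$, whose $n-2$ cyclic double descents may be colored freely in $2^{n-2}$ ways.
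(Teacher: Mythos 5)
Your proof is correct and follows essentially the same route as the paper's: both arguments show that a derangement with exactly one cyclic peak must be a single unimodal $n$-cycle (largest element $n$, descending to $1$, then ascending back), and then count by assigning each element of $\{2,\dots,n-1\}$ to the descending or ascending arc, giving $2^{n-2}$. You simply spell out details the paper leaves as ``easy to see'' (why there is one cycle, and the alternation of cyclic maxima and minima forcing unimodality), and your generating-function cross-check via Corollary~\ref{c-cpk} is a nice independent verification but not a different proof.
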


\begin{proof}
It is easy to see that every derangement $\pi$ of $[n]$ with exactly
one cyclic peak has exactly one cycle and can be written in the form
\[
(c_{1}c_{2}\cdots c_{k}c_{k+1}\cdots c_{n})
\]
where $c_{1}=n$ is the only cyclic peak of $\pi$, the sequence $c_{2}\cdots c_{k}$
is decreasing (with $c_{k}=1)$, and the sequence $c_{k+1}\cdots c_{n}$
is increasing. Thus, for every letter $i$ between $2$ and $n-1$,
either $i$ belongs to the decreasing sequence or the increasing sequence,
and these $n-2$ choices completely determine the derangement $\pi$.
\end{proof}
It follows from Corollary \ref{c-cpk} that the exponential generating
function for the numbers $D_{n}^{\cpk}(-1)$ is the reciprocal of the exponential generating function $G(x)$ for
the Pell numbers.
\begin{cor}
$D^{\cpk}(-1;x)=G(x)^{-1}$
\end{cor}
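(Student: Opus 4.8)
The plan is to obtain this corollary as the single specialization $t=-1$ of Corollary~\ref{c-cpk}, relying on the closed form for $G(x)$ recorded at the start of Section~\ref{s-hopping}. Recall from there that the exponential generating function for the Pell numbers satisfies $G(x)=1+V(-1,1;x)$, so the entire task reduces to recognizing the denominator appearing in the $t=-1$ specialization of Corollary~\ref{c-cpk} as exactly $G(x)$.

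Concretely, I would start from the formula
\[
D^{\cpk}(t;x)=\frac{1}{1-tV(t,1;x)}
\]
of Corollary~\ref{c-cpk} and substitute $t=-1$ directly. This turns the denominator $1-tV(t,1;x)$ into $1-(-1)V(-1,1;x)=1+V(-1,1;x)$, which is precisely $G(x)$ by the displayed identity above. Therefore $D^{\cpk}(-1;x)=1/G(x)=G(x)^{-1}$, as claimed.

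There is essentially no obstacle remaining at this point: all of the combinatorial content has already been spent in establishing Theorem~\ref{t-cpkcdblV} (via cyclic valley-hopping and the sign-reversing involution on tilings), in deducing Corollary~\ref{c-cpk} as a specialization of Theorem~\ref{t-cyc4stats}, and in recording $G(x)=1+V(-1,1;x)$. The only point worth verifying is the sign bookkeeping in the substitution, namely that $-t=-(-1)=+1$ flips the subtraction in the denominator into the addition matching $G$; once this is checked, the corollary is immediate.
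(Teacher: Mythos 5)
Your proof is correct and matches the paper's (implicit) argument exactly: the corollary is obtained by setting $t=-1$ in Corollary~\ref{c-cpk} and identifying the resulting denominator $1+V(-1,1;x)$ with $G(x)$ via the identity $G(x)=1+V(-1,1;x)$ recorded in Section~\ref{s-hopping}. Nothing further is needed.
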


We do not have a combinatorial interpretation for
the numbers $D_{n}^{\cpk}(-1)$ themselves. The first several of these
numbers appear in the following table.
\noindent \begin{center}
\begin{tabular}{c|cccccccccc}
$n$ & 1 & 2 & 3 & 4 & 5 & 6 & 7 & 8 & 9 & 10\tabularnewline
\hline 
$D_{n}^{\cpk}(-1)$ & $0$ & $-1$ & $-2$ & $1$ & $28$ & $111$ & $-126$ & $-4067$ & $-26280$ & $53663$\tabularnewline
\end{tabular}
\end{center}

\subsection{Counting derangements by cyclic double descents}

Lastly, we study the distribution of the cyclic double descent number
over derangements.

\begin{cor}
\label{c-cddes} ${\displaystyle D^{\cddes}(t;x)=\frac{1}{1-V(1,(1+t)/2;x)}}$
\end{cor}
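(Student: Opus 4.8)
The plan is to obtain Corollary~\ref{c-cddes} as a direct specialization of Theorem~\ref{t-cyc4stats}, exactly paralleling how Corollary~\ref{c-exc} and Corollary~\ref{c-cpk} were derived from it. The statistic $\cddes$ is the fourth coordinate of the quadruple distribution $(\cpk,\cval,\cdasc,\cddes)$, so the first step is to set the bookkeeping variables so that only $\cddes$ survives. Concretely, I would set $s=t=1$ (killing $\cpk$ and $\cval$), $u=1$ (killing $\cdasc$), and keep $v=t$ as the variable tracking $\cddes$. Since every letter of a derangement is exactly one of a cyclic peak, cyclic valley, cyclic double ascent, or cyclic double descent, this substitution reduces $D^{(\cpk,\cval,\cdasc,\cddes)}(1,1,1,t;x)$ precisely to $D^{\cddes}(t;x)$.

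Next I would plug these values into the right-hand side of Theorem~\ref{t-cyc4stats}, namely $1/\bigl(1-stV(st,(u+v)/2;x)\bigr)$. With $s=t=1$ we get $st=1$, and with $u=1,\,v=t$ we get $(u+v)/2=(1+t)/2$. The right-hand side therefore collapses to
\[
\frac{1}{1-V(1,(1+t)/2;x)},
\]
which is exactly the claimed formula. The entire argument is a one-line substitution once Theorem~\ref{t-cyc4stats} is in hand, so there is essentially no obstacle of a technical nature.

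The only point requiring a word of care is the justification that setting those variables to $1$ really does specialize the quadruple distribution to the single $\cddes$ distribution, i.e.\ that $D^{\st}(t;x)$ as defined for a single statistic agrees with the corresponding coordinate of the multivariate generating function after the other variables are set to $1$; this is immediate from the definitions of $D_n^{(\st_1,\dots,\st_m)}$ and $D_n^{\st}$ given earlier. I expect this to be entirely routine, mirroring the treatment already used for Corollaries~\ref{c-exc} and~\ref{c-cpk}, and so the corollary admits a short proof consisting solely of the substitution described above.
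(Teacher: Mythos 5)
Your proposal is correct and matches the paper's approach exactly: the paper states Corollary~\ref{c-cddes} as an immediate specialization of Theorem~\ref{t-cyc4stats} (with no separate written proof), obtained precisely by the substitution $s=t=1$, $u=1$, $v=t$ that you describe. Your handling of the notational collision between the theorem's variable $t$ (tracking $\cval$) and the corollary's variable $t$ (tracking $\cddes$) is careful and correct.
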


The exponential generating function formula
\[
D^{\cddes}(t;x)=\frac{\beta e^{-\frac{1}{2}(1+t)x}}{\beta\cosh(\frac{1}{2}\beta x)-(1+t)\sinh(\frac{1}{2}\beta x)},
\]
where $\beta=\sqrt{(t+3)(t-1)}$, can be obtained by specializing
(\ref{e-4stats}). The first several of the polynomials $D_{n}^{\cddes}(t)$ appear in the following table.
\noindent \begin{center}
\begin{tabular}{ccc}
	\begin{tabular}{c|c}
		$n$ & $D_{n}^{\cddes}(t)$\tabularnewline
		\hline 
		1 & 0\tabularnewline
		2 & 1\tabularnewline
		3 & $1+t$\tabularnewline
		4 & $6+2t+t^{2}$\tabularnewline
	\end{tabular}
	&&
	\begin{tabular}{c|c}
		$n$ & $D_{n}^{\cddes}(t)$\tabularnewline
		\hline 
		5 & $19+21t+3t^{2}+t^{3}$\tabularnewline
		6 & $109+98t+53t^{2}+4t^{3}+t^{4}$\tabularnewline
		7 & $588+808t+334t^{2}+118t^{3}+5t^{4}+t^{5}$\tabularnewline
		8 & $4033+5766t+3827t^{2}+952t^{3}+248t^{4}+6t^{5}+t^{6}$\tabularnewline
	\end{tabular}
\end{tabular}
\end{center}

An \textit{increasing run} of a permutation $\pi$ is a maximal increasing
consecutive subsequence of $\pi$ (in one-line notation). For example,
the increasing runs of $\pi=467192685$ are $467$, $19$, $268$,
and $5$. We call increasing runs of length 1 \textit{short runs}. The
sequence of constant coefficients of $D_{n}^{\cddes}(t)$ matches
the OEIS sequence \cite[A097899]{oeis} for the number of permutations
of $[n]$ with no short runs; this can be verified by comparing the
evaluation $D^{\cddes}(0;x)$ with the exponential generating function
of this OEIS entry, but we give a bijective proof below. 

For the purpose of this proof, let us temporarily modify our convention for cycle notation so that we write each cycle with its smallest letter in the first position, and write the cycles from left-to-right in decreasing order of their smallest letters. For example, whereas we previously wrote $\pi=649237185$ as $\pi=(42)(716)(8)(953)$ in canonical cycle representation, now we write $\pi$ as $\pi=(8)(395)(24)(167)$. Let $o'\colon\mathfrak{S}_{n}\rightarrow\mathfrak{S}_{n}$ be the map defined by taking a permutation in cycle notation under this new convention and erasing the parentheses, yielding a permutation in one-line notation. This is a bijection; we can recover the cycles of $\pi$ by noting the left-to-right minima of $o(\pi)$: given a permutation $\sigma=\sigma_1 \sigma_2 \cdots \sigma_n$, we say that $\sigma_{i}$ is a \textit{left-to-right minimum} of $\sigma$ if $\sigma_{j}>\sigma_{i}$
for all $1\leq j<i$.

\begin{prop}
An letter $i\in[n]$ is a fixed point or cyclic double descent of $\pi\in\mathfrak{S}_{n}$
if and only if $i$ is a short run of $o^{\prime}(\pi)$. 
\end{prop}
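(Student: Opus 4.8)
The plan is to track the position of the letter $i$ in $\sigma\coloneqq o'(\pi)$ and to compare its two neighbors there with its cyclic neighbors $\pi^{-1}(i)$ and $\pi(i)$. First I would record the structural features of the new convention: each cycle block of $\sigma$ opens with its smallest element, and since the cycles are listed in decreasing order of their smallest elements, the cycle-minima are exactly the left-to-right minima of $\sigma$ and appear in decreasing order. Two consequences drive the argument. If $i$ is the last letter of its block, then either $i$ ends $\sigma$ or its right neighbor is the minimum of the next cycle, which is strictly smaller than the minimum of $i$'s own cycle and hence smaller than $i$; so there is always a descent (or the right boundary of $\sigma$) just after $i$. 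Symmetrically, if $i$ is the first letter of its block (a cycle-minimum), then either $i$ begins $\sigma$ or its left neighbor is the last letter of the preceding cycle, which is at least that cycle's minimum and hence strictly larger than $i$; so there is always a descent (or boundary) just before $i$.

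Next I would translate the short-run condition: $i$ is a short run of $\sigma$ exactly when it is both preceded by a descent (or starts $\sigma$) and followed by a descent (or ends $\sigma$). When $i$ is internal to its cycle block, its left neighbor is $\pi^{-1}(i)$ and its right neighbor is $\pi(i)$, so the two conditions read $\pi^{-1}(i)>i$ and $i>\pi(i)$ directly. I claim these same two inequalities govern the boundary cases. If $i$ is a cycle-minimum of a cycle of length $\geq 2$, then $\pi(i)>i$ while the left descent is automatic by the first paragraph, so ``preceded by a descent'' $\iff \pi^{-1}(i)>i$ and ``followed by a descent'' $\iff i>\pi(i)$ still hold (both sides of the first equivalence true, both sides of the second false). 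If instead $i$ is the last letter of a cycle of length $\geq 2$, then $\pi(i)$ is that cycle's minimum, so $\pi(i)<i$ and the right descent is automatic, while the left neighbor is again $\pi^{-1}(i)$; so the two equivalences persist here as well.

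I would then split into the fixed-point case and the non-fixed case. If $i$ is a fixed point it forms a singleton block, so by the two structural observations it is flanked by descents (or boundaries) on both sides and is therefore a short run; since it also lies on the ``fixed point'' side of the statement, the equivalence holds. If $i$ is not a fixed point, it lies in a cycle of length $\geq 2$, and combining the equivalences above shows that $i$ is a short run of $\sigma$ iff $\pi^{-1}(i)>i$ and $i>\pi(i)$, that is, iff $\pi^{-1}(i)>i>\pi(i)$, which is precisely the condition for $i$ to be a cyclic double descent.

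The main obstacle is the bookkeeping at cycle boundaries: one must verify that whenever a cyclic neighbor of $i$ fails to be the adjacent letter of $\sigma$ (namely when $i$ opens or closes its block), the corresponding inequality among $\pi^{-1}(i)$, $i$, $\pi(i)$ is forced into the value that keeps both equivalences intact. This rests crucially on the decreasing-order-of-minima convention, which guarantees that every inter-cycle step of $\sigma$ is a descent and that a cycle-minimum is always preceded (if at all) by a strictly larger letter.
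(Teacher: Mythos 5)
Your proof is correct and takes essentially the same approach as the paper's: both rest on the structural consequences of the decreasing-minima convention (every inter-cycle step of $\sigma = o'(\pi)$ is a descent, and a cycle minimum is always preceded by a larger letter), followed by a case analysis on whether $i$ is a fixed point, or is internal, first, or last in its cycle, comparing the neighbors of $i$ in $\sigma$ with $\pi^{-1}(i)$ and $\pi(i)$. The only difference is organizational: you phrase each case as a two-sided equivalence, so both directions of the ``if and only if'' come out simultaneously, whereas the paper proves the forward implication case by case and dismisses the reverse direction as similar.
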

In particular, this proposition implies that the number of derangements
of $[n]$ with no cyclic double descents is equal to the number of
permutations of $[n]$ with no short runs.
\begin{proof}
Fix a permutation $\pi\in\mathfrak{S}_{n}$ and a letter $i\in [n]$. Let us write $o^{\prime}(\pi)=\sigma_{1}\sigma_{2}\cdots\sigma_{n}$
in one-line notation, and take $\sigma_{j}=i$. By taking as convention
$\sigma_{0}=\infty$ and $\sigma_{n+1}=0$, it is easy to see that
$\sigma_{j}=i$ is a short run of $o^{\prime}(\pi)$ if and only if
$\sigma_{j-1}>\sigma_{j}>\sigma_{j+1}$.

We divide into cases. First, suppose that $i\in[n]$ is a fixed point
of $\pi$. Then $\sigma_{j}$ and $\sigma_{j+1}$
are both left-to-right minima, so $\sigma_{j-1}>\sigma_{j}>\sigma_{j+1}$.
Now suppose that $i\in[n]$ is a cyclic double descent of $\pi$.
Note that the first letter of a cycle (under our current convention)
cannot be a cyclic double descent. If $i$ is neither the first nor
last letter of its cycle in $\pi$, then $\sigma_{j-1}>\sigma_{j}>\sigma_{j+1}$.
Otherwise, if $i$ is the last letter of its cycle in $\pi$, then
$\sigma_{j-1}>\sigma_{j}$ and $\sigma_{j+1}$ is a left-to-right
minimum; thus $\sigma_{j-1}>\sigma_{j}>\sigma_{j+1}$.
In each case, it follows that $\sigma_{j}=i$ is a short run of $o^{\prime}(\pi)$.
Hence, every fixed point and cyclic double descent of $\pi$ is
a short run of $o^{\prime}(\pi)$; the reverse direction is similar.
\end{proof}

Finally, we give a cyclic analogue of Theorem \ref{t-ddesformula}
for derangements.
\begin{thm}
	\label{t-cddesformula} For all $n\geq1$, we have
	\[
	D_{n}^{\cddes}(-1)=\begin{cases}
	E_{n}, & \mbox{if }n\text{ is even,}\\
	0, & \mbox{if }n\text{ is odd.}
	\end{cases}
	\]
	Thus $D^{\cddes}(-1;x)=\sec(x)$.
\end{thm}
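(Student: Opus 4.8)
The plan is to evaluate the exponential generating function from Corollary \ref{c-cddes} at $t=-1$, recognize the result as $\sec x$, and then read off the coefficients against the Euler-number generating function. Setting $t=-1$ makes $(1+t)/2 = 0$, so Corollary \ref{c-cddes} gives
\[
D^{\cddes}(-1;x) = \frac{1}{1 - V(1,0;x)}.
\]
Everything therefore reduces to computing $V(1,0;x)$.

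To find $V(1,0;x)$ I would specialize the closed form \eqref{e-Vcf} at $s=1$, $t=0$. Here $\sqrt{t^2-s} = \sqrt{-1}$, the factor $e^{tx}$ becomes $1$, and the final term vanishes because of its factor of $t$; using $\cosh(ix) = \cos x$ leaves $V(1,0;x) = 1 - \cos x$. As a check that avoids the imaginary square root, the recurrence \eqref{e-Ust_rec} at $s=1$, $t=0$ reads $U_n(1,0) = -U_{n-2}(1,0)$ with $U_0(1,0) = 1$ and $U_1(1,0) = 0$, so $U_{2m}(1,0) = (-1)^m$ and $U_{2m+1}(1,0) = 0$; substituting into the definition of $V$ and reindexing recovers the Maclaurin series of $1 - \cos x$. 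Either way,
\[
D^{\cddes}(-1;x) = \frac{1}{1 - (1 - \cos x)} = \frac{1}{\cos x} = \sec x.
\]

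It then remains to extract coefficients. Since $\sec x$ is even and $\tan x$ is odd, $\sec x$ is exactly the even part of $\sec x + \tan x = \sum_{n\geq 0} E_n x^n/n!$, whence $\sec x = \sum_{m\geq 0} E_{2m} x^{2m}/(2m)!$. Comparing this with $D^{\cddes}(-1;x) = 1 + \sum_{n\geq 1} D_n^{\cddes}(-1)\, x^n/n!$ yields $D_n^{\cddes}(-1) = E_n$ for even $n$ and $D_n^{\cddes}(-1) = 0$ for odd $n$, as claimed.

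The main obstacle is bookkeeping rather than conceptual: one must evaluate $V(1,0;x)$ correctly, which means either handling the branch $\sqrt{t^2-s} = i$ in \eqref{e-Vcf} or tracking the alternating signs produced by the recurrence. I would also record the combinatorial parallel with Theorem \ref{t-ddesformula}: comparing Corollary \ref{c-cddes} at $t=-1$ with Theorem \ref{t-cpkcdblV} at $s=1$, $t=0$ shows $D_n^{\cddes}(-1) = D_n^{(\cpk,\cdbl)}(1,0)$, the number of derangements of $[n]$ all of whose letters are cyclic peaks or cyclic valleys; since $\cpk = \cval$ on derangements this forces $n$ to be even, matching the vanishing for odd $n$. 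Proving directly that these derangements number $E_n$ for even $n$ would give a fully combinatorial proof, but that enumeration is less transparent than the alternating-permutation count used in Theorem \ref{t-ddesformula}, so I would rely on the generating-function computation above.
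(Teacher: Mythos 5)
Your argument is correct: $V(1,0;x)=1-\cos x$ (both of your verifications of this are sound), so Corollary \ref{c-cddes} at $t=-1$ gives $D^{\cddes}(-1;x)=\sec x$, and extracting coefficients against $\sec x+\tan x$ finishes the proof. The paper, however, proves the theorem combinatorially, via exactly the route you sketched in your last paragraph and then set aside. Comparing Corollary \ref{c-cddes} at $t=-1$ with Theorem \ref{t-cpkcdblV} at $(s,t)=(1,0)$ gives $D_{n}^{\cddes}(-1)=D_{n}^{(\cpk,\cdbl)}(1,0)$, the number of derangements of $[n]$ with no cyclic double ascents and no cyclic double descents; this vanishes for odd $n$ since $\cpk=\cval$ on derangements, and for even $n$ the transformation fondamentale $o$ (erasing parentheses in canonical cycle notation) restricts to a bijection from these derangements to the reverse-alternating permutations of $[n]$, which are counted by $E_{n}$. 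So the enumeration you judged ``less transparent than the alternating-permutation count'' is in fact made transparent by the map $o$; that bijection is the one idea your proposal lacks. The trade-off: your computation is shorter and needs only the closed form \eqref{e-Vcf} (or the recurrence \eqref{e-Ust_rec}) plus the defining generating function of $E_{n}$, while the paper's argument is purely combinatorial---in keeping with its stated aims---avoids the complex square root $\sqrt{t^{2}-s}=i$ entirely, and explains \emph{why} the secant numbers appear: the derangements being counted are reverse-alternating permutations in disguise.
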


In other words, the $(-1)$-evaluation of the cyclic double descent
distribution over $\mathfrak{D}_{2n}$ gives the secant number $E_{2n}$.
\begin{proof}
By comparing Theorem \ref{t-cpkcdblV} with Corollary \ref{c-cddes}, 
we have $D_{n}^{\cddes}(-1)=D_{n}^{(\cpk,\cdbl)}(1,0)$ for
	all $n\geq1$. Observe that $D_{n}^{(\cpk,\cdbl)}(1,0)$ is the number
	of derangements of $[n]$ with no cyclic double ascents or cyclic
	double descents. Because the number of cyclic peaks of any permutation
	is equal to its number of cyclic valleys, it is evident that there
	are no such permutations for odd $n$, and it is easy to see that
	the map $o$ defined earlier is a bijection between such permutations
	for an even $n$ and \textit{reverse-alternating permutations} of
	$[n]$: permutations $\pi=\pi_{1}\pi_{2}\cdots\pi_{n}$ satisfying
	$\pi_{1}>\pi_{2}<\pi_{3}>\pi_{4}<\cdots>\pi_{n}$. Since there are
	$E_{n}$ reverse-alternating permutations in $\mathfrak{S}_{n}$,
	the proof follows.
\end{proof}

\subsection*{Acknowledgements}

We thank an anonymous referee for providing thoughtful comments which improved the presentation of this paper and for pointing us to several key references from the literature on Chebyshev polynomials. We also thank Mohamed Omar and Justin Troyka for helpful discussions relating to this project.


\bibliographystyle{plain}
\bibliography{bibliography}

\end{document}